\theoremstyle{plain}
\newtheorem{theorem}{Theorem}[section]
\newtheorem{proposition}[theorem]{Proposition}
\newtheorem{corollary}[theorem]{Corollary}
\theoremstyle{definition}
\newtheorem{definition}[theorem]{Definition}
\newtheorem{example}[theorem]{Example}
\newcommand{\N}{{\mathbb{N}}}
\newcommand{\Z}{{\mathbb{Z}}}
\newcommand{\Q}{{\mathbb{Q}}}
\newcommand{\R}{{\mathbb{R}}}
\begin{document}

\title[AP monoid domains]{For which additive submonoids $M$ of nonnegative rationals is $F[X;M]$ AP?}

\author{Ryan Gipson}
\address{Department of Mathematics\\ 
University of Louisville\\
Louisville, KY 40292, USA}
\email{ryan.gipson@louisville.edu}
\thanks{$\ast$ the corresponding author}
\author{Hamid Kulosman$^\ast$}
\address{Department of Mathematics\\ 
University of Louisville\\
Louisville, KY 40292, USA}
\email{hamid.kulosman@louisville.edu}

\subjclass[2010]{Primary 13F15; 13A05; 20M25; 20M13. Secondary 13Gxx; 20M14}

\keywords{Submonoids of $\Q_+$; monoid  domains $F[X;M]$; AP domains; atomic domains; PC domains; gcd/lcm property; height $(0,0,0,\dots)$; essential generators; Pr\"ufer monoids; integrally closed monoids}

\date{}
\begin{abstract} 
We characterize the submonoids $M$ of the additive monoid $\Q_+$ of nonnegative rational numbers for which the irreducible and the prime elements in the monoid domain $F[X;M]$ coincide. We present a diagram of implications between some types of submonoids of $\Q_+$,  with a precise position of the monoids $M$ with this property. 
\end{abstract}
\maketitle

\section{Introduction}\label{intro}
If $M$ is a commutative monoid, written additively, and $F$ is a field, the monoid ring $F[X;M]$ consists of the polynomial expressions (also called polynomials)
\[f=a_0X^{\alpha_0}+a_1X^{\alpha_1}+\dots +a_nX^{\alpha_n},\]
where $n\ge 0$, $a_i\in F$, and $\alpha_i\in M$ $(i=0,1,\dots, n)$. $R$ is an integral domain if and only if $M$ is cancellative and torsion-free. If $M=\N_0$, then $R=F[X]$, which is a PID and so, in particular, the notions of irreducible elements (also called atoms) and prime elements coincide in $R$. The domains having this property are called AP domains. However, if $M=\langle 2,3\rangle=\{0,2,3,4,\dots\}$ (the submonoid of $\N_0$ generated by the elements $2$ and $3$), then, for example, the elements $X^2$ and $X^3$ of $R$ are atoms that are not prime. If we do not restrict ourselves to submonoids of $\N_0$, but consider the submonoids of $\Q_+=\{q\in\Q\;:\; q\ge 0\}$ instead, then, for example, the monoid domain $R=F[X;\Q_+]$ is an AP domain (by a theorem of R. Daileda \cite{d}), while in the monoid domain $R=F[X;M]$ with $\displaystyle{M=\langle \frac{1}{2}, \frac{1}{2^2}, \frac{1}{2^3},\dots; \frac{1}{5}\rangle}$ the element $\displaystyle{X^{1/5}}$ is an atom which is not prime (see \cite{gk}). So we naturally come to the following {\bf question}: {\it for which submonoids $M$ of $\Q_+$ is the monoid domain $F[X;M]$ AP?} The goal of this paper is to give an answer to this question. In addition to that, we will present an implication diagram between various properties of submonoids of $\Q_+$ in which we will precisely position the monoids $M$ for which $F[X;M]$ is AP.

\section{Notation and preliminaries}
We begin by recalling some definitions and statements. All the notions that we use  but not define in this paper, as well as the definitions and statements for which we do not specify the source,  can be found in the classical reference books \cite{c_book} by P.~M.~Cohn, \cite{gil_mit} and \cite{gil} by R.~Gilmer, \cite{k} by I.~Kaplansky, and \cite{n} by D.~G.~Northcott, as well as in our papers \cite{cgk} and \cite{gk}. We also recommend the paper \cite{a_df} in which the work of R.~Gilmer is nicely presented, in particular his work on characterizing cancellative torsion-free monoids $M$ for which the monoid domain $F[X;M]$ has the property $P$ for various properties $P$.

\smallskip
We use $\subseteq$ to denote inclusion and $\subset$ to denote strict inclusion. We denote $\N=\{1,2,\dots\}$ and $\N_0=\{0,1,2,\dots\}$. An element $\displaystyle{\frac{m}{n}}$ of $\Q_+=\{q\in\Q\;:\; q\ge 0\}$  is said to be in {\it reduced form} if $\gcd(m,n)=1$. If $\displaystyle{\frac{m_1}{n_1}, \frac{m_2}{n_2}}$ are two elements of $\Q_+$ in reduced form, then $\displaystyle{\frac{m_1}{n_1}=\frac{m_2}{n_2}}$ if and only if $m_1=m_2$ and $n_1=n_2$ (see \cite{gk}).

All the monoids used in the paper are assumed to be commutative and written additively. Thus a {\it monoid} is a nonempty set $M$ with an associative and commutative operation $+:M\times M\to M$, possessing an {\it identity element} $0\in M$ (such that $0+x=x$ for all $x\in M$). We say that a monoid $M$ is {\it cancellative} if for any elements $x,y,z\in M$, $x+y=x+z$ implies $y=z$. A monoid $M$ is {\it torsion-free} if for any $n\in \N$ and $x,y\in M$, $nx=ny$ implies $x=y$. If $M,M'$ are two monoids, a map $\mu: M\to M'$ is a {\it monoid homomorphism} if $\mu(x+y)=\mu(x)+\mu(y)$ for all $x,y\in M$, and $\mu(0)=0$. If it is also bijective, then it is called a {\it monoid isomorphism}. Then $\mu^{-1}:M'\to M$ is also a monoid isomorphism. If $M,M'$ are submonoids of $\Q_+$, then every isomorphism of $M$ onto $M'$ has the form $\mu_\tau(x)=\tau x$, where $\tau$ is a positive rational number (as it is easy to see). Then $M'=\tau M=\{\tau x\;:\; x\in M\}$. For a monoid $M\subseteq \Q_+$, the {\it difference group} of $M$ is the subgroup $\mathrm{Diff}(M)=\{x-y\;:\;x,y\in M\}$ of $\Q$.

A subset $I$ of a monoid $M$ is called is called an {\it ideal} of $M$ if $M+I=I$, i.e., if for every $a\in I$, $M+a\subseteq I$. (Here $S_1+S_2=\{x+y\;:\; x\in S_1,\, y\in S_2\}$ for any two subsets $S_1, S_2$ of $M$.) An ideal $I$ of $M$ is said to be {\it principal} if there is an element $a\in I$ such that $I=M+a$. We then write $I=(a)$. A submonoid of a monoid $M$ generated by a subset $A\subseteq M$ is denoted by $\langle A\rangle$ (while an ideal of $M$, generated by a subset $A\subseteq M$, is denoted by $(A)$, in order to avoid eventual confusions). 
We have: $\langle A\rangle=\{n_1a_1+\dots+n_ta_t\::\; t\ge 0, n_i\in\N_0, a_i\in A\; (i=1,2,\dots, t)\}$. We assume $\langle\emptyset\rangle=\{0\}$. A monoid $M$ is said to be {\it cyclic} if it can be generated by one element.
We introduced in \cite{gk} the notion of an {\it essential generator} of a monoid $M$, that is an element $a\in M$ such that $\langle M\setminus \{a\}\rangle \ne M$. Note that the essential generators of $M$ are precisely the {\it atoms} of $(M,+)$, i.e., the non-invertible elements $a\in M$ such that $a=b+c$ $(b,c\in M)$ implies that at least one of the elements $b,c$ is invertible in $M$. We say that a monoid $M$ is {\it atomic} if it can be generated by essential generators. (We also assume that the monoid $M=\{0\}$ is atomic since it can be generated by the empty set of essential generators, i.e., atoms.) We say that a monoid $M$ is an {\it ACCP monoid} if every  if every increasing sequence 
\[(a_1) \subseteq (a_2) \subseteq (a_3)\subseteq \dots\]
of principal ideals of $M$ is stationary, meaning that $(a_{n_0})=(a_{n_0+1})=(a_{n_0+2})=\dots$ for some $n_0$. (Note that the notions of an atomic and an ACCP monoid are analogous to the notions of an atomic and an ACCP integral domain. The similarities and differences between the ideal theories of monoids and integral domains are studied, for example, in \cite{k1}.)

\begin{proposition}[{\cite{gk}}]\label{prop_1}
Let $M$ be a monoid and let $A$ be its generating set. 

(a) If $a$ is an essential generator of $M$, then $a\in A$.

(b) If $a\in A$ is such that $\langle A\setminus \{a\}\rangle\ne M$, then $a$ is an essential generator of $M$.

(c) If $a_1, \dots, a_n\in A$ are nonessential generators of $M$, then $A'=A\setminus \{a_1,\dots, a_n\}$ is also a generating set of $M$. 
\end{proposition}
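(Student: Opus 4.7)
My plan is to handle the three parts in order, with (b) providing the main substance. Throughout I would use the paper's ambient setting that $M$ is (a submonoid of) $\Q_+$: this monoid is cancellative, has $0$ as its only invertible element, and enjoys the property that a sum of nonnegative rationals vanishes only when each summand does---all three facts are essential for the argument in (b).

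Part (a) is a one-line contraposition: if $a\notin A$, then $A\subseteq M\setminus\{a\}$ gives $M=\langle A\rangle\subseteq\langle M\setminus\{a\}\rangle\subseteq M$, hence $\langle M\setminus\{a\}\rangle=M$, so $a$ is not essential. For (b), I would assume $a\in A$ with $\langle A\setminus\{a\}\rangle\ne M$ and argue by contradiction. Two preliminary observations: first, $a\notin\langle A\setminus\{a\}\rangle$ (otherwise $A\subseteq\langle A\setminus\{a\}\rangle$, forcing $\langle A\setminus\{a\}\rangle=M$); second, $a\ne 0$ (since dropping $0$ from any generating set preserves generation). If $a$ failed to be essential we would have $a=c_1+\cdots+c_k$ with $c_i\in M\setminus\{a\}$; expanding each $c_i$ as a nonnegative integer combination of elements of $A$ and separating the $a$-contributions gives
\[a=Ea+D,\qquad E\in\N_0,\;D\in\langle A\setminus\{a\}\rangle.\]
Viewed inside $\Q_+$, a case split on $E$ then finishes the argument: the case $E=0$ puts $a=D\in\langle A\setminus\{a\}\rangle$, contradicting the first observation; the case $E\ge 2$ yields $a\ge 2a$ in $\Q_+$, forcing $a=0$ and contradicting the second observation; and the case $E=1$ forces $D=0$, so the nonnegative rational summands making up $D$ all vanish, which combined with $E=1$ pins down a unique index $i_0$ whose $c_{i_0}$ must equal $a$, contradicting $c_{i_0}\in M\setminus\{a\}$.

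For (c) I would iterate the contrapositive of (b): since $a_1\in A$ is a nonessential generator, (b) yields $\langle A\setminus\{a_1\}\rangle=M$; essentiality of $a_i$ is intrinsic to $M$ and does not depend on the chosen generating set, so $a_2$ remains a nonessential generator of $M$ lying in the new generating set $A\setminus\{a_1\}$, and the same step strips it off; after $n$ such steps, $A'=A\setminus\{a_1,\dots,a_n\}$ generates $M$. I expect the principal obstacle to be the $E=1$ subcase of (b): one must commit to specific decompositions $c_i=\sum_j m_{ij}b_{ij}$, extract the unique pair $(i_0,j_0)$ carrying the sole unit of multiplicity of $a$, and use $D=0$ together with the nonnegativity of rationals to force every other summand in the expansion of $c_{i_0}$ to vanish, so that $c_{i_0}=a$ emerges as the required contradiction.
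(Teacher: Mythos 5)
Your proof is correct; the paper itself does not reprove this proposition (it only cites \cite{gk}), and your argument is essentially the one expected there: contraposition for (a), an expand-in-$A$-and-use-positivity argument for (b), and iteration of (b) for (c), with the $E=0$, $E=1$, $E\ge 2$ case split correctly closed by cancellation and the fact that a sum of nonnegative rationals vanishes only termwise. Your decision to work inside $\Q_+$ is not merely convenient but necessary, and it is good that you flagged it: as literally stated for an arbitrary commutative monoid, part (b) is false (take $M=\Z$, $A=\{1,-1\}$, $a=1$; then $\langle A\setminus\{a\}\rangle=\{0,-1,-2,\dots\}\ne\Z$, yet $1=2+(-1)$ shows $\langle M\setminus\{1\}\rangle=\Z$, so $1$ is not essential), so the reducedness, cancellativity and positivity of $\Q_+$ are exactly what make your $E=1$ and $E\ge 2$ cases work.
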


\begin{proposition}[{\cite{gk}}]\label{prop_2}
Let $\mu:M\to M'$ be a monoid isomorphism and $a\in M$. Then $a$ is an essential generator of $M$ if and only if $\mu(a)$ is an essential generator of $M'$.
\end{proposition}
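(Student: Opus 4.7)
The plan is to exploit that a monoid isomorphism commutes with the operation of taking generated submonoids. By symmetry (applied to $\mu^{-1}$, which Proposition 2 implicitly uses through its biconditional form, and which the excerpt has already noted is a monoid isomorphism), it suffices to prove only one direction: if $a$ is an essential generator of $M$, then $\mu(a)$ is an essential generator of $M'$.

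First I would establish the key auxiliary fact: for any subset $A\subseteq M$, we have $\mu(\langle A\rangle)=\langle \mu(A)\rangle$. This follows directly from the explicit description of generated submonoids recalled in the preliminaries, namely $\langle A\rangle=\{n_1a_1+\dots+n_ta_t : t\ge 0,\, n_i\in\N_0,\, a_i\in A\}$, together with the fact that $\mu$ is a monoid homomorphism, so $\mu(n_1a_1+\dots+n_ta_t)=n_1\mu(a_1)+\dots+n_t\mu(a_t)$. The inclusion $\mu(\langle A\rangle)\subseteq\langle \mu(A)\rangle$ uses only that $\mu$ is a homomorphism, while the reverse inclusion uses surjectivity onto $\mu(A)$.

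Next, I would use injectivity of $\mu$ to get $\mu(M\setminus\{a\})=\mu(M)\setminus\{\mu(a)\}=M'\setminus\{\mu(a)\}$, the last equality by surjectivity. Combining with the auxiliary fact yields
\[\mu\bigl(\langle M\setminus\{a\}\rangle\bigr)=\langle M'\setminus\{\mu(a)\}\rangle.\]
Now if $a$ is an essential generator of $M$, then $\langle M\setminus\{a\}\rangle\subsetneq M$; applying the bijection $\mu$ preserves strict inclusion, so $\langle M'\setminus\{\mu(a)\}\rangle\subsetneq \mu(M)=M'$, which is the definition of $\mu(a)$ being an essential generator of $M'$.

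There is no real obstacle here; the only point requiring care is that the equality $\mu(M\setminus\{a\})=M'\setminus\{\mu(a)\}$ genuinely needs bijectivity (injectivity for $\subseteq$, surjectivity for $\supseteq$), so the argument would not go through for an arbitrary monoid homomorphism. Once this bookkeeping is in place, the argument is a one-line transport of structure along $\mu$.
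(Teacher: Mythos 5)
Your argument is correct and complete: the identity $\mu(\langle A\rangle)=\langle\mu(A)\rangle$, the bijectivity giving $\mu(M\setminus\{a\})=M'\setminus\{\mu(a)\}$, and the preservation of strict inclusion under a bijection are exactly the ingredients needed, and the reduction to one direction via $\mu^{-1}$ is legitimate. The paper itself states this proposition as a citation from \cite{gk} without reproducing a proof, so there is nothing to compare against here, but your transport-of-structure argument is the standard one and I see no gap.
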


The notion of a  Pr\"ufer monoid was introduced in \cite[p. 223-224]{gp} (see also \cite[p. 166-167]{gil}). We include the possibility that $M=\{0\}$.

\begin{definition}
We say that a monoid $M$ is:

(a) a {\it  Pr\"ufer monoid} it it is a union of an increasing sequence of cyclic submonoids;

(b) {\it difference-closed} if for any $a,b\in M$ with $a\ge b$ we have $a-b\in M$.
\end{definition}

\begin{definition}
We say that a monoid $M\subseteq \Q_+$ is: 

(a) a {\it half-group monoid} if there is a subgroup $G$ of the additive group $\Q$ of rational numbers such that $M=G\cap \Q_+$;

(b) {\it integrally closed} if for every $n\in\N$ and any $x,y\in M$ with $x\ge y$, $n(x-y)\in M$ implies $x-y\in M$.
\end{definition}

 In this paper all rings are {\it integral domains}, i.e., commutative rings with identity in which $xy=0$ implies $x=0$ or $y=0$.
A non-zero non-unit element $x$ of an integral domain $R$ is said to be {\it irreducible} (and called an {\it atom}) if $x=yz$ with $y,z\in R$ implies that $y$ or $z$ is a unit. A non-zero non-unit element $x$ of an integral doman $R$ is said to be {\it prime} if $x\mid yz$ with $y,z\in R$ implies $x\mid y$ or $x\mid z$. Every prime element is an atom, but not necessarily vice-versa. Two elements $x,y\in R$ are said to be {\it associates} if $x=uy$, where $u$ is a unit. We then write $x\sim y$. An element $x$ of $R$ is said to be {\it primal} if $x\mid ab$ for some $a,b\in R$ implies $x=x_1x_2$ for some $x_1, x_2\in R$ with $x_1\mid a$ and $x_2\mid b$. 

Let $R,T$ be two integral domains with $R\subseteq T$. An element $x\in T$ is {\it integral over $R$} if it is a root of a monic polynomial in the polynomial ring $R[X]$. The subring $R'$ of $T$ consisting of the elements of $T$ integral over $R$ is called the {\it integral closure of $R$ in $T$}. If $R'=R$, $R$ is said to be {\it integrally closed in $T$}. The integral closure of $R$ in its field of fractions is called the {\it integral closure} of $R$. 

\begin{definition}
An integral domain  $R$ is said to be:

(a) a {\it Euclidean domain} if there is a function $w:R\setminus \{0\}\to\N_0$ such that for any $a\in R$ and $b\in\R\setminus  \{0\}$ there exist $q,r\in R$ such that $a=bq+r$, and such that either $r=0$ or $w(r)<w(b)$.

(b) a {\it principal ideal domain} (PID) if every ideal of $R$ is principal. 

(c) a {\it unique factorization domain} (UFD) if it is atomic and for every non-zero, non-unit $x\in R$, every two factorizations of $x$ into atoms are equal up to order and associates. 

(d) {\it integrally closed} if it is equal to its integral closure.

(e) {\it Noetherian} if every ascending sequence of ideals of it stabilizes.

(f) an {\it ACCP domain} if every ascending sequence of principal ideals of it stabilizes.

(g) a {\it Dedekind domain} if it is a Noetherian integrally closed domain whose every prime ideal is maximal.

(h) a {\it B\'ezout domain} if every two-generated ideal of $R$ is principal. 

(i) a {\it GCD domain} if any two elements of it have a greatest common divisor.

(j) a {\it pre-Schreier domain} if every element $x\in R$ is {\it primal}.

(k) a {\it Schreier domain} if it is pre-Schreier and integrally closed.

(l) a {\it  Pr\"ufer domain} if each domain $S$ such that $R\subseteq S\subseteq K$, where $K$ is the field of fractions of $R$, is integrally closed.

(m) an {\it AP domain} if every atom of it is prime.

(n) {\it atomic} if every non-zero non-unit element of it can be written as a (finite) product of atoms. 

(o) a {\it MIP domain} if all maximal ideals of it are principal.

(p) a {\it PC domain} if every proper two-generated ideal of it is contained in a proper principal ideal.
\end{definition}

\begin{proposition}\label{implications_domains}
Diagram 1 (see below) of implications between some types of integral domains holds.
\end{proposition}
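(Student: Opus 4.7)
The statement is really an aggregated list of implications, one for each edge of Diagram~1, so the plan is to verify the arrows one by one; most are classical and can be dispatched by citing \cite{c_book}, \cite{gil}, \cite{k}, or \cite{n}, and the remaining ones admit short direct arguments. I would group the edges into four clusters and handle each cluster in turn. The \emph{ideal-theoretic cluster} comprises Euclidean $\Rightarrow$ PID, PID $\Rightarrow$ Dedekind, Dedekind $\Rightarrow$ Pr\"ufer, PID $\Rightarrow$ Noetherian, Noetherian $\Rightarrow$ ACCP, B\'ezout $\Rightarrow$ PC (immediate from the definition since a two-generated ideal is already principal, hence contained in itself), and PID $\Rightarrow$ MIP; every one of these is standard. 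The \emph{factorization cluster} consists of Euclidean $\Rightarrow$ PID $\Rightarrow$ UFD, UFD $\Rightarrow$ atomic, UFD $\Rightarrow$ ACCP, and ACCP $\Rightarrow$ atomic; the last of these requires the usual König-style argument that an ACCP domain in which some non-zero non-unit failed to be a product of atoms would produce a strictly ascending chain of principal ideals.

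The \emph{gcd cluster} consists of B\'ezout $\Rightarrow$ GCD, UFD $\Rightarrow$ GCD, GCD $\Rightarrow$ Schreier, Schreier $\Rightarrow$ pre-Schreier, and pre-Schreier $\Rightarrow$ AP. For GCD $\Rightarrow$ Schreier one must verify (i) primality of every element (i.e.\ primality in the sense of being primal: if $x\mid ab$ pick $d=\gcd(x,a)$, write $x=dx_1$, $a=da_1$ with $\gcd(x_1,a_1)=1$, and show $x_1\mid b$), and (ii) integral closedness of a GCD domain, both of which are in \cite{c_book}. The implication pre-Schreier $\Rightarrow$ AP is the crucial short step: if $x$ is an atom and $x\mid yz$, then primality of $x$ gives $x=x_1x_2$ with $x_1\mid y$, $x_2\mid z$; since $x$ is an atom, one of $x_1,x_2$ is a unit, so $x\mid y$ or $x\mid z$. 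Finally, the \emph{integral-closedness cluster} collects GCD $\Rightarrow$ integrally closed, Dedekind $\Rightarrow$ integrally closed, and Schreier $\Rightarrow$ integrally closed (the last being part of the definition).

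The main obstacle, as I anticipate it, is not any individual implication but rather bookkeeping: ensuring that every arrow drawn in Diagram~1 is accounted for and that no spurious arrow has been inadvertently asserted (for instance, atomic $\not\Rightarrow$ ACCP and ACCP $\not\Rightarrow$ Noetherian in general, so one must resist drawing reverse arrows). A secondary obstacle is the handling of the less standard PC condition: beyond B\'ezout $\Rightarrow$ PC, one should decide whether the diagram also contains MIP $\Rightarrow$ PC or PC $\Rightarrow$ AP -- neither holds in general, and the position of PC in the diagram must therefore be chosen conservatively. Once the arrow list is fixed, the verification reduces to the short standard arguments above together with references to the literature; I would present the proof as a brief paragraph per implication, with the two or three slightly subtler ones (ACCP $\Rightarrow$ atomic, GCD $\Rightarrow$ Schreier, pre-Schreier $\Rightarrow$ AP) written out in full.
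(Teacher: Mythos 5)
Your overall strategy --- verify the arrows one at a time, citing the standard literature for the classical ones and writing out short arguments for the few less standard ones --- is exactly the approach the paper takes (the paper's proof is essentially a list of citations, in particular to Cohn for the chain B\'ezout $\Rightarrow$ GCD $\Rightarrow$ Schreier $\Rightarrow$ pre-Schreier $\Rightarrow$ AP, and to the authors' own preprint for the PC-related arrows). However, there is one genuine and consequential error in your proposal: you assert that ``MIP $\Rightarrow$ PC'' and ``PC $\Rightarrow$ AP'' do not hold in general and should therefore be excluded from the diagram. Both implications \emph{are} arrows of Diagram~1, both must be verified as part of the proposition, and both are in fact true. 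For MIP $\Rightarrow$ PC: a proper two-generated ideal $I$ is contained in some maximal ideal $\m$, which by hypothesis is principal and is proper because it is maximal; hence $I$ lies in a proper principal ideal. For PC $\Rightarrow$ AP: let $x$ be an atom with $x\mid yz$. If $(x,y)$ is proper, it is contained in a proper principal ideal $(c)$; then $c\mid x$ with $c$ a non-unit, and since $x$ is an atom this forces $c\sim x$, so $x\mid y$. If instead $(x,y)=R$, write $1=ax+by$ and multiply by $z$ to get $z=axz+byz$; since $x\mid yz$, it follows that $x\mid z$. So your ``conservative'' choice would delete two correct arrows that the statement requires, and the justification you give for deleting them is mathematically wrong.

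Apart from this, your clusters cover the remaining edges correctly, and your worked-out steps (ACCP $\Rightarrow$ atomic via the ascending-chain argument, GCD $\Rightarrow$ Schreier, pre-Schreier $\Rightarrow$ AP) are sound and consistent with what the cited references contain. One further minor difference: the paper's proof also records examples witnessing the strictness of some non-implications (Zafrullah's example of a pre-Schreier domain that is not integrally closed, and Evans' example $k[X,XY,XY^2,\dots]$ of an integrally closed ACCP domain that is not AP). These are not logically required to establish the implications of Diagram~1, so their absence from your write-up is not a gap, but they are worth knowing since they justify the absence of the reverse arrows you rightly caution against.
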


\begin{proof}
Most of these implications are well-known or/and follow from the definitions. 
The implications B\'ezout $\Rightarrow$ GCD $\Rightarrow$ Schreier $\Rightarrow$ pre-Schreier $\Rightarrow$ AP can be found in \cite{c}. That Dedekind $\Rightarrow$ Pr\"ufer can be seen in \cite[page 169]{gil}.
An example that a pre-Schreier domain does not have to be integrally closed is given in \cite{z}. The domain $R=k[X, XY, XY^2,\dots]$, where $k$ is a field (known as {\it G.~Evans' example}),  is an example of an integrally closed domain which is not AP. Indeed, by \cite[page 51]{h}, or \cite[page 114]{k}, $R$ is integrally closed and ACCP. It is not AP, otherwise it would be a UFD, but it is not, as $X\cdot XY^2=XY\cdot XY$ are two decompositions of $X^2Y^2$ into non-associated atoms. The PC domains were introduced in our paper \cite{cgk_PID}, where all the implications involving this type of domains can be found.
\end{proof}

Later, when we consider all these types of domains in the context of the monoid domains $F[X;M]$ with $M\subseteq \Q_+$, the types of domains inside inside each of the two circles on the diagram will coincide. That was already known for the small circle and for all the types from the big circle except the AP domains. The fact that the AP type of domains coincide with other types from the big circle in the context  of the monoid domains $F[X;M]$ with $M\subseteq \Q_+$ is the main result of this paper. 

%-------------------------
\section{Some properties of monoids $M\subseteq \Q_+$ and the associated monoid domains $F[X;M]$}

The next proposition is a characterization of when $F[X;M]$ is a Euclidean domain, or a PID, or a Dedekind domain.

\begin{proposition}[{\cite[Theorem 8.4]{gp_mich}}]\label{ED_PID_Dedekind}
Let $M$ be a submonoid of $\Q_+$. The following are equivalent:

(a) $M=\{0\}$ or $M\cong \N_0$;

(b) $F[X;M]$ is a Euclidean domain;

(c) $F[X;M]$ is a PID;

(d) $F[X;M]$ is a Dedekind domain.
\end{proposition}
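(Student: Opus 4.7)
The equivalence splits into the routine chain (a)$\Rightarrow$(b)$\Rightarrow$(c)$\Rightarrow$(d) and the substantive converse (d)$\Rightarrow$(a).

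For the routine chain: if $M=\{0\}$ then $F[X;M]\cong F$ is a field and trivially Euclidean, while if $M\cong\N_0$ then the monoid isomorphism extends to a ring isomorphism $F[X;M]\cong F[X]$, which is Euclidean via the ordinary degree function. That Euclidean $\Rightarrow$ PID $\Rightarrow$ Dedekind are textbook facts (cf.\ \cite{k}, \cite{n}).

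For (d)$\Rightarrow$(a) my plan is a three-step argument. \emph{Step 1.} Since any Dedekind domain is Noetherian, the Gilmer--Parker Noetherianity criterion for monoid rings from \cite{gp_mich} forces $M$ to be finitely generated. \emph{Step 2.} Assuming $M\neq\{0\}$, write $M=\langle q_1,\dots,q_k\rangle\subseteq \Q_+$ and apply the scaling isomorphism $\mu_\tau(x)=\tau x$ with $\tau$ a common denominator of the $q_i$, to obtain $M\cong\tau M\subseteq\N_0$; dividing by $d=\gcd(\tau M)$ then yields $M\cong N$, where $N\subseteq\N_0$ is finitely generated with $\gcd(N)=1$, i.e., a numerical monoid. \emph{Step 3.} I rule out $N\subsetneq\N_0$ by contradicting the integral closedness of a Dedekind domain. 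Since $\gcd(N)=1$, Bezout's identity applied to a generating set of $N$ and the separation of positive and negative indices gives $1\in\mathrm{Diff}(N)$, so $1=a-b$ with $a,b\in N$ and $X=X^a/X^b$ lies in the fraction field of $F[X;N]$. For any $n\in N\setminus\{0\}$, the element $X$ satisfies the monic relation $T^n-X^n=0$ with $X^n\in F[X;N]$, hence is integral over $F[X;N]$. If $N\subsetneq\N_0$ then $1\notin N$ (otherwise $\N_0=\langle 1\rangle\subseteq N$), so $X\notin F[X;N]$, contradicting integral closedness. Therefore $N=\N_0$, and hence $M\cong\N_0$.

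The main obstacle is Step 1, which imports the non-trivial Gilmer--Parker characterization of Noetherian monoid rings. Steps 2 and 3 are elementary, resting only on the scaling isomorphisms of Proposition \ref{prop_2}, the definition of integral closure, and Bezout's identity.
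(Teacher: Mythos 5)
Your proof is correct. There is nothing in the paper to compare it against: the proposition is quoted from Gilmer--Parker (\cite[Theorem~8.4]{gp_mich}) without proof, so your write-up is a genuine reconstruction rather than a variant of an in-paper argument. The chain (a)$\Rightarrow$(b)$\Rightarrow$(c)$\Rightarrow$(d) is routine as you say (a field is Euclidean with $w\equiv 0$, and $M\cong\N_0$ transports the degree function). For (d)$\Rightarrow$(a), Step~1 does import a nontrivial external fact, but it is one the paper itself uses and cites (the equivalence ``$M$ finitely generated $\equiv$ $F[X;M]$ Noetherian'' appears in Diagram~2 with reference to \cite[Theorem~7.7]{gil}), so this is fair game. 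Steps~2 and~3 check out: the scaling isomorphisms $\mu_\tau$ reduce to a numerical monoid $N\subseteq\N_0$ with $\gcd(N)=1$; B\'ezout gives $1=a-b$ with $a,b\in N$, so $X=X^a/X^b$ lies in the fraction field; $X$ is a root of the monic polynomial $T^n-X^n$ with $X^n\in F[X;N]$ for any nonzero $n\in N$; and integral closedness of a Dedekind domain forces $X\in F[X;N]$, i.e.\ $1\in N$ and hence $N=\N_0$. A slicker but heavier route for Step~3, using only machinery already in the paper, would be: Dedekind $\Rightarrow$ integrally closed $\Rightarrow$ ($M$ integrally closed, by Proposition~\ref{Pr_Be_IntCl}) $\Rightarrow$ ($M\cong\N_0$ or $M$ has no elements of height $(0,0,0,\dots)$, by Theorem~\ref{conditions_thm}), and a nonzero finitely generated monoid always has such elements (its essential generators), forcing $M\cong\N_0$. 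Your direct integrality computation buys independence from Theorem~\ref{conditions_thm} and is entirely elementary, which is preferable for a statement that logically precedes that theorem in the paper.
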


\phantom{X}
\vspace{.5cm}
\hspace{-3cm}
\begin{tikzpicture}
%first column
\node[draw] (NOETH) at (1.01,-8) {Noetherian};
\node[draw] (DED) at (1,-11) {Dedekind};
%second column
\node[draw] (A) at (2.99,-3) {Atomic};
\node[draw] (ACCP) at (3,-6) {ACCP};
\node[draw] (PID) at (3,-13) {PID};
\node[draw] (EUCL) at (2.995,-14.5) {Euclidean};
\node[draw] (FIELD) at (3, -16.5) {Field};
%third column
\node[draw] (ID) at (8.3,0) {Integral domain};
\node[draw] (UFD) at (5,-11) {UFD};
%fourth column
\node[draw] (INT_CL) at (8.31,-5) {\begin{tabular}{c} Integrally\\ closed\end{tabular}};
\node[draw] (PRUF) at (8.3, -8.2) {Pr\"ufer};
%fifth column
\node[draw] (AP) at (11.985,-3) {AP};
\node[draw] (PRE_SCH) at (11.98,-5) {Pre-Schreier};
\node[draw] (SCH) at (11.99,-7) {Schreier};
\node[draw] (GCD) at (12,-8.6) {GCD};
\node[draw] (BEZ) at (11.99,-10.6) {B\'ezout};
%sixth column
\node[draw] (PC) at (14.2,-8.6) {PC};
%seventh column
\node[draw] (MIP) at (15,-10.5) {MIP};
\draw[-implies, double equal sign distance] (A) -- (ID);
\draw[-implies, double equal sign distance] (INT_CL) -- (ID);
\draw[-implies, double equal sign distance] (AP) -- (ID);
\draw[-implies, double equal sign distance] (ACCP) -- (A);
\draw[-implies, double equal sign distance] (NOETH) -- (ACCP);
\draw[-implies, double equal sign distance] (DED) --  (NOETH);
\draw[-implies, double equal sign distance] (DED) -- (PRUF);
\draw[-implies, double equal sign distance] (UFD) -- (ACCP);
\draw[-implies, double equal sign distance] (PID) -- (DED);
\draw[-implies, double equal sign distance] (PID) -- (UFD);
\draw[-implies, double equal sign distance] (EUCL) -- (PID);
\draw[-implies, double equal sign distance] (FIELD) -- (EUCL);
\draw[-implies, double equal sign distance] (PRUF) -- (INT_CL);
\draw[-implies, double equal sign distance] (UFD) -- (GCD);
\draw[-implies, double equal sign distance] (BEZ) -- (PRUF);
\draw[-implies, double equal sign distance] (BEZ) -- (GCD);
\draw[-implies, double equal sign distance] (BEZ) -- (PC);
\draw[-implies, double equal sign distance] (GCD) -- (SCH);
\draw[-implies, double equal sign distance] (SCH) -- (PRE_SCH);
\draw[-implies, double equal sign distance] (SCH) -- (INT_CL);
\draw[-implies, double equal sign distance] (PRE_SCH) -- (AP);
\draw[-implies, double equal sign distance] (MIP) -- (PC);
\draw[-implies, double equal sign distance] (PC) ..controls (14,-4).. (AP);
%\draw[-implies, double equal sign distance] () ..controls (13,-2).. ();
\draw[-implies, double equal sign distance] (PID) -- (BEZ);
\draw[-implies, double equal sign distance] (PID) ..controls (12.5,-12).. (MIP);
\node[circle, draw] (first_circle) at (2.82,-12.3) {\phantom{XXXXXXXXXXXXXXXXXXXXX}};
\node[circle, draw] (second_circle) at (11.5,-7.3) {\phantom{XXXXXXXXXXXXXXXXXXXXXXXXXXXXXXXXXXXX}};
\node (title) at (8.5,-18) {Diagram 1: Implications between some types of integral domains};
\end{tikzpicture}

\newpage
The next proposition is a characterization of when $F[X;M]$ is a Pr\"ufer domain, or a B\'ezout domain, or an integrally closed domain.

\begin{proposition}[{\cite[Theorem (iv)]{gp}, \cite[Corollary 12.11]{gil}, \cite[Theorem 13.5]{gil}}]\label{Pr_Be_IntCl}
Let $M$ be a submonoid of $\Q_+$. The following are equivalent:

(a) $M$ is integrally closed;

(b) $F[X;M]$ is a  Pr\"ufer domain;

(c) $F[X;M]$ is a B\'ezout domain;

(d) $F[X;M]$ is integrally closed.
\end{proposition}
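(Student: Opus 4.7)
The plan is to prove the equivalences cyclically: $(c)\Rightarrow(b)\Rightarrow(d)\Rightarrow(a)\Rightarrow(c)$. The first two implications are immediate from Diagram~1, since every B\'ezout domain is Pr\"ufer and every Pr\"ufer domain is integrally closed, so no work is needed there.

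For $(d)\Rightarrow(a)$ I would argue directly from the definitions. Suppose $F[X;M]$ is integrally closed and take $x,y\in M$ with $x\ge y$ and $n(x-y)\in M$ for some $n\in\N$. The element $u=X^{x}/X^{y}$ of the quotient field of $F[X;M]$ satisfies $u^{n}=X^{n(x-y)}\in F[X;M]$, so $u$ is a root of the monic polynomial $T^{n}-X^{n(x-y)}\in F[X;M][T]$ and therefore integral over $F[X;M]$. By hypothesis $u=X^{x-y}\in F[X;M]$, which reads as $x-y\in M$ at the level of exponents, giving the monoid-theoretic integral-closure property.

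The substantive step is $(a)\Rightarrow(c)$, which I would extract from the cited theorems of Gilmer--Parker \cite{gp} and Gilmer \cite[Corollary~12.11, Theorem~13.5]{gil}. The strategy is a two-stage reduction. First, show that an integrally closed submonoid of $\Q_+$ is a Pr\"ufer monoid, i.e., an ascending union of cyclic submonoids; equivalently (and more transparently for the computation), show that $M=\mathrm{Diff}(M)\cap\Q_+$, so $M$ is a half-group monoid, and use the fact that every subgroup of $\Q$ is a directed union of cyclic subgroups. Second, for such an $M=\bigcup_{\lambda}\langle a_{\lambda}\rangle$, write
\[
F[X;M]=\bigcup_{\lambda}F[X;\langle a_{\lambda}\rangle],
\]
observe that each $F[X;\langle a_{\lambda}\rangle]\cong F[X]$ is a PID (hence B\'ezout), and note that a directed union of B\'ezout domains is again B\'ezout, because any two elements of the union lie in a common member where the ideal they generate is already principal, and principality survives passage to the overring.

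The main obstacle is the first stage---the purely monoid-theoretic translation of ``integrally closed'' into ``ascending union of cyclics''. It hinges on the fact that a finitely generated integrally closed submonoid $\langle p_{1}/q_{1},\dots,p_{k}/q_{k}\rangle\subseteq\Q_+$ (generators in reduced form) must be cyclic, because the integral-closure hypothesis can be iterated on suitable differences of the generators to produce $1/\operatorname{lcm}(q_{i})$ inside $M$, and that element then absorbs all of the original generators. Once this is in place, the rest of the argument is essentially formal.
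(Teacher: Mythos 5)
The paper itself offers no proof of this proposition---it is quoted from Gilmer--Parker \cite{gp} and Gilmer \cite{gil}---so the comparison has to be made against the machinery the paper does develop, namely Theorem \ref{conditions_thm}. Your architecture is sound and is essentially the expected one: (c) $\Rightarrow$ (b) $\Rightarrow$ (d) from Diagram 1; your (d) $\Rightarrow$ (a) via the monic polynomial $T^{n}-X^{n(x-y)}$ is correct and complete (the only point worth making explicit is that $X^{x-y}\in F[X;M]$ forces $x-y\in M$ because the monomials $X^{a}$, $a\in M$, are $F$-linearly independent, so multiplying by $X^{y}$ and comparing with $X^{x}$ pins the exponent down); and the second stage of (a) $\Rightarrow$ (c)---a directed union of B\'ezout domains is B\'ezout because a two-generated ideal is already principal in some member of the union---is a standard and correct argument. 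The first stage of (a) $\Rightarrow$ (c) is exactly the implication (f) $\Rightarrow$ (a) of Theorem \ref{conditions_thm}, which the paper proves in full via (f) $\Rightarrow$ (b) $\Rightarrow$ (c) $\Rightarrow$ (a) (integrally closed $\Rightarrow$ difference-closed $\Rightarrow$ half-group $\Rightarrow$ Pr\"ufer monoid), so your overall route and the paper's are the same in substance.

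The one genuine defect is in the step you yourself single out as the main obstacle, and it is stated incorrectly. First, the element that the integral-closure (or difference-closure) hypothesis produces is $\gcd(p_{1},\dots,p_{k})/\operatorname{lcm}(q_{1},\dots,q_{k})$, not $1/\operatorname{lcm}(q_{1},\dots,q_{k})$: the monoid $M=\langle 2\rangle$ (or $\langle 2/3\rangle$) is integrally closed and cyclic, but does not contain $1$ (respectively $1/3$). This is precisely the gcd/lcm condition, condition (d) of Theorem \ref{conditions_thm}. Second, the lemma you appeal to---``a finitely generated integrally closed submonoid is cyclic''---is not the statement you need, because the submonoid of $M$ generated by finitely many of its elements need not inherit integral closedness (take $\langle 2,3\rangle\subseteq\Q_{+}=M$); what you need is that every finite subset of the integrally closed monoid $M$ is contained in a cyclic submonoid \emph{of $M$}, which is what the gcd/lcm condition delivers. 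Both defects are repaired by the computation in the proof of (b) $\Rightarrow$ (d) in Theorem \ref{conditions_thm}, where B\'ezout coefficients $k,l$ with $dkx_{2}y_{1}-dlx_{1}y_{2}=\pm d$ are used to exhibit $\gcd(m_{1},m_{2})/\operatorname{lcm}(n_{1},n_{2})$ as a difference of two elements of $M$; as it stands, though, your sketch of this step would not go through as written.
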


The atomicity of the monoid domains $F[X;M]$ for submonoids $M$ of $\Q_+$ is characterized by the next proposition.

\begin{proposition}[{\cite[Theorem 3.6]{g}}]\label{atomicity_thm}
If $M$ is a submonoid of $\Q_+$, then $F[X;M]$ is atomic if and only if $M$ can be generated by essential generators.
\end{proposition}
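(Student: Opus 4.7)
I plan to treat the two directions separately; the forward direction is straightforward, and the backward direction is the crux of the argument.

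$(\Rightarrow)$ Assume $F[X;M]$ is atomic. Since $M\subseteq\Q_+$ contains no nonzero invertible element, the units of $F[X;M]$ are exactly $F^*$, so for every nonzero $a\in M$ the monomial $X^a$ is a nonzero non-unit and admits an atomic factorization $X^a=f_1\cdots f_n$. The key observation is that every factor of a monomial in $F[X;M]$ is itself a monomial up to a nonzero scalar: because $M\subseteq\Q_+$ is totally ordered by the usual order on $\Q$, for any $g,h\in F[X;M]$ the smallest (resp.\ largest) exponent of $gh$ equals the sum of the smallest (resp.\ largest) exponents of $g$ and $h$; applying this to $X^a=gh$ forces the smallest and largest exponents of $g$ to coincide, and likewise for $h$. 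Hence each $f_i=c_iX^{b_i}$ with $c_i\in F^*$ and $b_i\in M\setminus\{0\}$, so $a=b_1+\cdots+b_n$. Finally, each $b_i$ must be an essential generator of $M$, for otherwise a decomposition $b_i=b'+b''$ with $b',b''\in M\setminus\{0\}$ would yield the nontrivial factorization $f_i=(c_iX^{b'})\cdot X^{b''}$ inside $F[X;M]$, contradicting that $f_i$ is an atom.

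$(\Leftarrow)$ Conversely, assume $M$ is generated by its essential generators and let $f\in F[X;M]$ be a nonzero non-unit. The plan is to use an auxiliary UFD. Choose a positive integer $d$ clearing the denominators of all exponents in $\mathrm{supp}(f)$; then $f$ lies in $F[Y]$ with $Y=X^{1/d}$, which is a UFD. Factor $f$ in $F[Y]$ as $f=c\cdot Y^{d\alpha_0}\cdot\prod_j h_j(Y)^{m_j}$, with $c\in F^*$, $\alpha_0=\min\mathrm{supp}(f)$, and $h_j$ distinct monic irreducibles satisfying $h_j(0)\neq 0$. Atomicity of $M$ gives $\alpha_0=\gamma_1+\cdots+\gamma_s$ for essential generators $\gamma_\ell$, so the monomial piece is a product of monomial atoms $Y^{d\alpha_0}=X^{\gamma_1}\cdots X^{\gamma_s}$ in $F[X;M]$.

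The main obstacle is to handle the non-monomial factors $h_j(Y)$, which in general do not individually lie in $F[X;M]$. I view the $F[Y]$-factorization of $f$ as a finite multiset $\mathcal{F}$ of irreducibles (with $d\alpha_0$ copies of $Y$ together with $m_j$ copies of each $h_j$), and consider partitions of $\mathcal{F}$ in which the product over every block lies in $F[X;M]$. At least one such partition exists---the trivial one $\{\mathcal{F}\}$, since $f\in F[X;M]$---and because $\mathcal{F}$ is finite there are only finitely many partitions, so one may fix a valid partition with the maximum number of blocks. The claim is that each block of such a maximal partition yields an atom of $F[X;M]$: if the product $P$ of some block admitted a nontrivial factorization $P=gh$ in $F[X;M]$ with $g,h$ non-units, uniqueness of factorization in $F[Y]$ would split the multiset underlying $P$ into two disjoint nonempty sub-multisets corresponding to $g$ and $h$, both with product in $F[X;M]$, yielding a strict refinement of the partition and contradicting maximality. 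The products of the blocks then supply the required atomic factorization of $f$ in $F[X;M]$.
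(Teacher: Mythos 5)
First, a point of comparison: the paper itself gives no proof of this proposition --- it is quoted from \cite[Theorem 3.6]{g} --- so there is no internal argument to measure yours against. Your forward direction is correct: since $M\subseteq\Q_+$ is totally ordered and $F$ is a domain, minimal and maximal exponents add under multiplication, so every divisor of a monomial is a scalar times a monomial, and an atomic factorization of $X^a$ exhibits $a$ as a sum of atoms of $(M,+)$, i.e., of essential generators.

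The backward direction has a genuine gap, located exactly at the step carrying all the weight. When the product $P$ of a block admits a nontrivial factorization $P=gh$ in $F[X;M]$, you invoke unique factorization in $F[Y]$ to split the multiset underlying $P$; but $g$ and $h$ need not lie in $F[Y]$ at all, since their exponents lie in $M$ and their denominators are in no way controlled by the integer $d$ you fixed using $\mathrm{supp}(f)$ alone. Concretely, take $M=\langle \frac{1}{2},\frac{1}{3}\rangle=\frac{1}{6}\langle 2,3\rangle$ and $f=X-1$: here $d=1$, $Y=X$, and $f$ is irreducible in $F[X]$, so $\mathcal{F}$ is a singleton, the only partition is the trivial one, and your procedure declares $X-1$ an atom of $F[X;M]$; yet $X-1=(X^{1/2}-1)(X^{1/2}+1)$ is a nontrivial factorization in $F[X;M]$. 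The natural repair --- allowing refinements in $F[X^{1/d'}]$ for ever larger multiples $d'$ of $d$ --- destroys the finiteness on which your maximality argument rests, since the multiset of irreducible factors grows with $d'$ (already $Y$ splits into $d'/d$ copies of $X^{1/d'}$). This is not an incidental difficulty: the implication ``$M$ atomic $\Rightarrow$ $F[X;M]$ atomic'' is precisely the direction the authors themselves record as open in Concluding Remark (3), and it has since been shown to fail in general (Coykendall and Gotti, and later Gotti and Li, constructed atomic Puiseux monoids whose monoid algebras are not atomic). So no argument of this shape can close the gap; the implication only holds under extra hypotheses (for instance when $M$ is finitely generated, or when $F[X;M]$ satisfies ACCP).
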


With respect to the AP-ness of the monoid domains $F[X;M]$ for $M\subseteq \Q_+$ we have the next statement from our paper \cite{gk}.

\begin{proposition}[{\cite[Propositions 2.8 and 2.9]{gk}}]\label{X_to_a}
Let $M$ be a submonoid of $\Q_+$, not isomorphic to $\N_0$. Then the irreducible elements of $F[X;M]$ of the form $X^a$, $a\in M$, are precisely the $X^a$ with $a$ an essential generator of $M$, and they are all non-prime.  
\end{proposition}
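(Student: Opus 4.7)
I would split the statement into the characterization of irreducibility and the non-primality claim, and keep the whole argument at the level of the exponent monoid $M$.

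For the first half, I would use the fact that $M \subseteq \Q_+$ inherits a total order, so every nonzero $f \in F[X;M]$ has a well-defined smallest and largest exponent in its support. Given a factorization $X^a = fg$, the smallest exponent on the right is $\min\,\mathrm{supp}(f) + \min\,\mathrm{supp}(g)$ and the largest is $\max\,\mathrm{supp}(f) + \max\,\mathrm{supp}(g)$, and neither extremal term can cancel against any other; so if $fg$ is the single monomial $X^a$, both $f$ and $g$ must already be monomials. Hence every factorization of $X^a$ has the form $f = \alpha X^b$, $g = \beta X^c$ with $\alpha\beta = 1$ and $b + c = a$. Since the units of $F[X;M]$ are $F^*$, irreducibility of $X^a$ reduces to the condition that $b + c = a$ in $M$ forces $b = 0$ or $c = 0$, which is exactly the definition (recalled in Section 2) of $a$ being an atom, i.e., an essential generator of $M$.

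For non-primality, I would argue as follows. Assume $a$ is an essential generator and $M \not\cong \N_0$. The only cyclic submonoids of $\Q_+$ are $\{0\}$ and those isomorphic to $\N_0$, so $M$ is non-cyclic, and in particular $M \neq \langle a \rangle$. Pick any $b_0 \in M \setminus \langle a \rangle$ and apply descent: if some $b_k \in M \setminus \langle a \rangle$ happens to lie in $a + M$, write $b_k = a + b_{k+1}$ with $b_{k+1} \in M$, and observe that $b_{k+1} \notin \langle a \rangle$ (otherwise $b_k \in \langle a \rangle$). Because each step subtracts $a > 0$, the process cannot continue indefinitely within $\Q_+$, and it terminates at some $b \in M \setminus \langle a \rangle$ with $b \notin a + M$. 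Writing $b/a = p/q$ in lowest terms forces $q \geq 2$, and then $qb = pa$ gives $(X^b)^q = (X^a)^p$, so $X^a \mid (X^b)^q$. If $X^a$ were prime, the standard fact that a prime dividing a power must divide the base would yield $X^a \mid X^b$, i.e., $b - a \in M$, contradicting $b \notin a + M$. Hence $X^a$ is not prime.

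The main technical step is the descent producing $b \in M \setminus \langle a \rangle$ outside $a + M$; everything else is monomial bookkeeping that uses the total order on $M$, or a routine consequence of the definition of prime. The hypothesis $M \not\cong \N_0$ enters exactly to guarantee $M \neq \langle a \rangle$, which is what gets the descent off the ground.
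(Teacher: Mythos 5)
Your proof is correct and complete. Note that the paper does not actually prove this proposition itself --- it imports it from \cite{gk} (Propositions 2.8 and 2.9) --- so there is no in-text argument to compare against; but your two-part argument stands on its own: the min/max-of-support computation correctly reduces irreducibility of $X^a$ to $a$ being an atom of $M$ (which the paper identifies with essential generators in Section 2), and for non-primality the descent producing $b\in M\setminus\langle a\rangle$ with $b\notin a+M$ is exactly the step needed to make the relation $(X^b)^q=(X^a)^p$ yield a contradiction with primality, since merely having $b\notin\langle a\rangle$ would not rule out $X^a\mid X^b$. The only point worth making explicit is that $b\neq 0$ (immediate, since $0\in\langle a\rangle$) so that $q\ge 2$ and $p\ge 1$ are justified.
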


To further study the AP-ness of the monoid domains $F[X;M]$ for $M\subseteq \Q_+$ we introduced in \cite{gk} the following notion.

\begin{definition}[{\cite{gk}}]
We say that a monoid $M\subseteq \Q_+$ satisfies the {\it gcd/lcm condition} if for any $t\in \N$ and any elements $\displaystyle{\frac{m_1}{n_1}, \frac{m_2}{n_2},\dots, \frac{m_t}{n_t}}\in M$, written in reduced form, at least one of which is $\ne 0$,  we have $\displaystyle{\frac{\gcd(m_1,\dots, m_t)}{\mathrm{lcm}(n_1,\dots, n_t)}\in M}$.
\end{definition}

It is easy to see that an {\it equivalent definition} of the gcd/lcm condition is that for any {\it two} elements $\displaystyle{\frac{m_1}{n_1}, \frac{m_2}{n_2}\in M}$, written in reduced form, at least one of which is $\ne 0$,  we have $\displaystyle{\frac{\gcd(m_1, m_2)}{\mathrm{lcm}(n_1, n_2)}\in M}$.

\smallskip
The next statement is a generalization of the theorem of Daileda, mentioned in Introduction. (The proof that we gave in \cite{gk} follows Daileda's proof of his theorem from \cite{d}.) The statement is a step toward the main theorem of this paper, namely Theorem \ref{main_thm}, and is in fact used in its proof.

\begin{proposition}[{\cite[Theorem 5.4]{gk}}]\label{gcd_lcm_implies_AP}
If the monoid $M\subseteq \Q_+$ satisfies the gcd/lcm condition, then for any field $F$ the monoid domain $F[X;M]$ is AP.
\end{proposition}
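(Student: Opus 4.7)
The plan is to reduce the problem to a univariate polynomial ring, where every atom is automatically prime. Given an atom $f \in F[X;M]$ and a witness $fg_1 = gh$ to $f \mid gh$, I would first collect the full (finite) set of exponents $\alpha_1,\ldots,\alpha_t \in M$ that appear in $f, g, h, g_1$, written in reduced form $\alpha_i = m_i/n_i$. Since $f$ is an atom, it is a non-unit, hence not in $F^*$, and therefore at least one $\alpha_i$ is nonzero. The gcd/lcm condition then produces
\[
\delta = \frac{\gcd(m_1,\ldots,m_t)}{\mathrm{lcm}(n_1,\ldots,n_t)} \in M,
\]
and the elementary identity
\[
\frac{\alpha_i}{\delta} = \frac{m_i}{\gcd(m_1,\ldots,m_t)} \cdot \frac{\mathrm{lcm}(n_1,\ldots,n_t)}{n_i} \in \N_0
\]
shows that every $\alpha_i$ is a non-negative integer multiple of $\delta$. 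Consequently $f, g, h, g_1$ all lie in the subring $F[X^\delta] \subseteq F[X;M]$.

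Next I would note that the monoid isomorphism $k \mapsto k\delta$ from $\N_0$ onto $\langle\delta\rangle$ induces a ring isomorphism $F[Y] \cong F[X^\delta]$ sending $Y$ to $X^\delta$, so $F[X^\delta]$ is a UFD. I would then verify that $f$ remains an atom in this smaller ring: because $M \subseteq \Q_+$ is a reduced monoid, the units of $F[X;M]$ coincide with the units of $F[X^\delta]$, namely $F^*$, so any nontrivial factorization of $f$ inside $F[X^\delta]$ would already contradict the atomicity of $f$ inside $F[X;M]$. Being an atom in a UFD, $f$ is prime in $F[X^\delta]$, and the relation $fg_1 = gh$ --- which now lies entirely in $F[X^\delta]$ --- forces $f \mid g$ or $f \mid h$ there, and therefore in $F[X;M]$.

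The one substantive step is the construction of the common ``denominator'' $\delta$ via the gcd/lcm condition together with the fraction identity above; I expect this to be the main point requiring care, though it is essentially arithmetic. Zero exponents are harmless under the convention $0 = 0/1$, since $\gcd(0,m) = m$ and $\mathrm{lcm}(1,n) = n$, so their presence affects neither $\delta$ nor the divisibility $\alpha_i/\delta \in \N_0$. Once $\delta$ is in hand, the passage to $F[X^\delta]$, the identification with $F[Y]$, and the invocation of the UFD property are all routine.
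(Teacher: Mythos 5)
Your argument is correct and is essentially the same as the cited proof (which, as the paper notes, follows Daileda's argument for $F[X;\Q_+]$): use the gcd/lcm condition to place all relevant exponents in a single cyclic submonoid $\langle\delta\rangle$, pass to the subring $F[X^\delta]\cong F[Y]$, observe that units agree so atomicity descends, and invoke that atoms are prime in a UFD. The arithmetic identity showing $\alpha_i/\delta\in\N_0$ and the handling of zero exponents are both sound, so there is nothing to correct.
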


For a prime number $p$ the notion of the {\it $p$-height} $h_p(a)$ of an element $a$ of a torsion-free group $G$ is defined in \cite[page 108 ]{f} as the nonnegative integer $r$ such that $a\in p^rG\setminus p^{r+1}G$ if such an integer exists and as $\infty$ otherwise. The sequence $(h_2(a), h_3(a), h_5(a), \dots)$ of $p$-heights of $a$ as $p$ goes through all prime numbers in the increasing order is called there the {\it height sequence} of $a$. In our paper \cite{cgk} we considered the elements of height $(0,0,0,\dots)$, but, more generally, in the torsion-free monoids instead of groups.

\begin{definition}[{\cite{cgk}}]
We say that an element $a$ of a torsion-free monoid $M$ is of height $(0,0,0,\dots)$ if for every prime number $p$ the equation $px=a$ cannot be solved for an $x\in M$.
\end{definition} 

\begin{example}\label{ht_0_0_0_not_ess_gen}
Note that every essential generator of $M$ is an element of height $(0,0,0,\dots)$. The converse does not hold. For example, in the monoid 
\[M=\langle\frac{1}{2}, \frac{1}{2^2}, \frac{1}{2^3}\dots;  \frac{1}{3}, \frac{1}{3^2}, \frac{1}{3^3}, \dots \rangle\subseteq \Q_+\] 
the element $\displaystyle{\frac{5}{6}}$ is of height $(0,0,0,\dots)$, but is not an essential generator.
\end{example}

\begin{proposition}[{\cite{cgk}}]\label{ht_0_elem_preserved_under_isoms}
Let $\mu:M\to M'$ be a monoid isomorphism. An element $a\in M$ is of height $(0,0,0,\dots)$ in $M$ if and only if $\mu(a)$ is of height $(0,0,0,\dots)$ in $M'$.
\end{proposition}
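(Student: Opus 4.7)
The plan is to prove both implications directly from the definition by exploiting the fact that a monoid homomorphism commutes with multiplication by positive integers, together with the fact that the inverse of a monoid isomorphism is again a monoid isomorphism.

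First I would observe that if $\mu:M\to M'$ is any monoid homomorphism, then an easy induction on $n$ using $\mu(x+y)=\mu(x)+\mu(y)$ and $\mu(0)=0$ shows that $\mu(nx)=n\mu(x)$ for every $n\in\N_0$ and every $x\in M$. So if $p$ is a prime and the equation $px=a$ has a solution $x\in M$, then applying $\mu$ gives $p\mu(x)=\mu(a)$, exhibiting a solution in $M'$ to the equation $py=\mu(a)$. Taking the contrapositive: if $\mu(a)$ is of height $(0,0,0,\dots)$ in $M'$, then $a$ is of height $(0,0,0,\dots)$ in $M$.

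For the reverse implication, I would invoke the standard fact (already noted in the paper's \textbf{Notation and preliminaries} section) that $\mu^{-1}:M'\to M$ is also a monoid isomorphism. Applying the previous paragraph to $\mu^{-1}$ and to the element $\mu(a)\in M'$, we see that if $a=\mu^{-1}(\mu(a))$ is of height $(0,0,0,\dots)$ in $M$, then $\mu(a)$ is of height $(0,0,0,\dots)$ in $M'$. Combining the two directions yields the equivalence.

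There is no real obstacle here; the statement is a routine verification. The only thing worth emphasizing in the write-up is that we do not need $M$ or $M'$ to be embedded in $\Q_+$ or to be cancellative for the argument to work — we only use that $\mu$ is a bijective monoid homomorphism and that its inverse is therefore also a monoid homomorphism. This is the same style of argument used to establish Proposition \ref{prop_2} for essential generators.
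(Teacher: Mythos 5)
Your argument is correct: the forward observation that a monoid homomorphism sends a solution of $px=a$ to a solution of $py=\mu(a)$, combined with applying the same observation to $\mu^{-1}$, is exactly the routine verification this statement calls for. The paper itself only cites Proposition~\ref{ht_0_elem_preserved_under_isoms} from \cite{cgk} without reproducing a proof, so there is nothing substantive to compare against; your approach (and your remark that neither torsion-freeness nor the embedding in $\Q_+$ is actually used) is the expected one.
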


The next statement is the main theorem of our paper \cite{cgk}. It is not true for fields of positive characteristics (as the examples and related questions in \cite{cgk} illustrate). We use this statement in the proof of the main theorem of this paper (namely Theorem \ref{main_thm}) and that explains why we need in it the assumption that $F$ is of characteristic $0$.

\begin{theorem}[{\cite[Theorem 4.1]{cgk}}]\label{irr_thm}
Let $M$ be a submonoid of $\Q_+$, $F$ a field of characteristic $0$, and $\pi$ an element of $M$ of height $(0,0,0,\dots)$. Then the binomial $X^\pi-1$ is irreducible in $F[X;M]$.
\end{theorem}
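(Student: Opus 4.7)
My plan is to argue by contradiction: assume that $X^\pi - 1 = f(X)\, g(X)$ in $F[X;M]$ with $f$ and $g$ both non-units. Since the units of $F[X;M]$ are exactly the nonzero scalars in $F$ (the only invertible element of $M$ being $0$), neither $f$ nor $g$ is a single monomial. Writing $\pi = m/n$ in reduced form, I would choose a positive integer $d$ divisible by $n$ and by every reduced denominator of the exponents occurring in $f$ and $g$. The formal substitution $Y = X^{1/d}$ turns the identity into a polynomial factorization
\[
Y^k - 1 \;=\; f(Y)\, g(Y) \qquad \text{in }F[Y],
\]
where $k := d\pi \in \N$, and the $Y$-supports of $f$ and $g$ lie in $dM \subseteq \N_0$. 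Because $\operatorname{char}F = 0$, the polynomial $Y^k - 1$ is separable, so $f$ and $g$ are squarefree polynomials whose roots lie in the group $\mu_k \subset \bar F$ of $k$-th roots of unity.

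The heart of the argument is to extract a positive element $\delta \in dM$ that divides $k$ strictly. Comparing the coefficient of the smallest positive $Y$-exponent in $fg = Y^k - 1$ (which must vanish since that exponent lies strictly between $0$ and $k$) forces the smallest positive elements of $\operatorname{supp}(f)$ and $\operatorname{supp}(g)$ to coincide; call this common value $\delta$. Then $\delta \in \operatorname{supp}(f) \subseteq dM$, and $\delta \le \deg_Y f < k$ since $g$ is non-constant. To see that $\delta$ divides $k$, I would iterate the coefficient comparison up through the supports to conclude that $\operatorname{supp}(f), \operatorname{supp}(g) \subseteq \delta\,\Z$, and then invoke a symmetry argument: for $\omega \in \bar F$ a primitive $\delta$-th root of unity, the invariance $f(\omega Y) = f(Y)$ combined with $f \mid Y^k - 1$ yields $f \mid \omega^k Y^k - 1$, so $f$ divides the constant $\omega^k - 1$; since $f$ has positive degree, $\omega^k = 1$, whence $\delta \mid k$.

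Setting $\ell := k/\delta \geq 2$, one then computes
\[
\frac{\pi}{\ell} \;=\; \frac{k/d}{\ell} \;=\; \frac{\delta}{d} \;\in\; M.
\]
For any prime $p$ dividing $\ell$, the element $\pi/p = (\ell/p) \cdot (\pi/\ell)$ is a non-negative integer multiple of an element of $M$, hence also lies in $M$. This contradicts the hypothesis that $\pi$ has height $(0,0,0,\dots)$, completing the proof.

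The main obstacle is the iterated coefficient comparison that forces $\operatorname{supp}(f) \subseteq \delta\,\Z$. The weaker statement that $\Delta := \gcd(\operatorname{supp}(f))$ divides $k$ is immediate from the symmetry argument applied with $\omega$ a primitive $\Delta$-th root of unity. Upgrading this to the divisibility $\delta \mid k$ for the minimal positive support element (which a priori may exceed $\Delta$) requires a careful combinatorial analysis of the Minkowski-sum structure of $\operatorname{supp}(f) + \operatorname{supp}(g)$ together with the vanishing of appropriate elementary symmetric functions of roots of unity: the idea is to show that if an element of $\operatorname{supp}(f)$ failed to be a multiple of $\delta$, then a non-cancellable monomial of $fg$ would appear at an exponent strictly between $0$ and $k$. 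The characteristic-$0$ hypothesis enters precisely here, through the separability of $Y^k - 1$ and the expected behavior of the roots of unity in $\bar F$.
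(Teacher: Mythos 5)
The paper does not actually prove this statement---it is imported wholesale from \cite[Theorem 4.1]{cgk}---so your argument has to stand on its own, and it does not yet. To give credit where due: the reduction to a factorization $Y^k-1=f(Y)\,g(Y)$ in $F[Y]$ with supports in $dM$, the observation that the smallest positive elements of $\operatorname{supp}(f)$ and $\operatorname{supp}(g)$ must coincide (call it $\delta$), and the endgame (from $\delta\in dM$, $\delta\mid k$, $\delta<k$ to a contradiction with the height hypothesis via $\pi/p=(\ell/p)(\delta/d)\in M$) are all correct. The gap is exactly where you flag it, and it is fatal to the route you propose: the assertion that $\operatorname{supp}(f)\subseteq\delta\Z$ is simply false for factorizations of $Y^k-1$. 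For instance,
\[
Y^{12}-1=\bigl(Y^5+Y^3-Y^2-1\bigr)\bigl(Y^7-Y^5+Y^4+Y^3-Y^2+1\bigr),
\]
where the first factor is $(Y^2+1)(Y^3-1)$; both factors have smallest positive support element $\delta=2$, yet both supports contain odd exponents. So no ``iterated coefficient comparison'' of the Minkowski-sum structure can establish the containment, and the substitution $Y\mapsto\omega Y$ on which your divisibility argument rests is unavailable.

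The divisibility $\delta\mid k$ that you actually need is nevertheless true, but its proof has to go through the arithmetic of roots of unity rather than through support combinatorics. Writing $f=c\prod_{\zeta\in S}(Y-\zeta)$ with $S\subseteq\mu_k$, one first shows that $e_j(S)=0$ if and only if $e_{|S|-j}(S)=0$ (use $e_{|S|-j}(S)=e_{|S|}(S)\,e_j(S^{-1})$ and the automorphism $\zeta\mapsto\zeta^{-1}$ of $\Q(\mu_k)$); this identifies $\delta$ with the least $j\ge1$ such that $e_j(S)\ne0$. Newton's identities, valid in characteristic $0$, convert this to the least $j$ with $\sum_{\zeta\in S}\zeta^{j}\ne0$. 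Finally, $\sum_{\zeta\in S}\zeta^{j}$ is an integer polynomial evaluated at $\zeta_k^{j}$, so its vanishing depends only on $\gcd(j,k)$ by Galois conjugacy over $\Q$; since every $j'<\delta$ gives a vanishing sum and $\gcd(\delta,k)<\delta$ would be such a $j'$ lying in the same conjugacy class as $\delta$, we must have $\gcd(\delta,k)=\delta$, i.e.\ $\delta\mid k$. This is where the characteristic-$0$ hypothesis genuinely enters, not merely through separability of $Y^k-1$. As written, your proposal acknowledges but does not close this step, and the specific mechanism you announce for closing it cannot work, so the argument is not yet a proof.
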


%-----------------------------------------------------
\section{When is $F[X;M]$ an AP domain?}
The equivalence of the conditions (a), (c') and (f) from the next theorem was proved in \cite[Theorem 13.5]{gil}, we include the proofs for the sake of completeness.

\begin{theorem}\label{main_thm_prep}\label{conditions_thm}
Let $M$ be a submonoid of $\Q_+$ and $F$ a field. The following are equivalent:

(a) $M$ is a Pr\"ufer monoid;

(b) $M$ is difference-closed;

(c) $M$ is a half-group monoid;

(c') $M=\mathrm{Diff}(M)\cap \Q_+$;

(d) $M$ satisfies the gcd/lcm condition;

(e) $M\cong \N_0$ or $M$ has no elements of height $(0,0,0,\dots)$;

(f) $M$ is integrally closed.
\end{theorem}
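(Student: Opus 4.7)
The plan is to establish the equivalences via a cycle through the conditions, exploiting several immediate reformulations before tackling the two subtler steps. First, (b), (c), and (c') are essentially the same condition: difference-closedness rephrases to $\mathrm{Diff}(M) \cap \Q_+ \subseteq M$, and combined with the trivial reverse inclusion this gives (c'); the half-group condition (c) becomes (c') upon noting that $\mathrm{Diff}(G \cap \Q_+) = G$, since every $g \in G$ equals either $g - 0$ or $0 - (-g)$ with the relevant element lying in $G \cap \Q_+$. The equivalence (a)$\Leftrightarrow$(c') then reduces to the standard fact that every finitely generated subgroup of $\Q$ is cyclic, so a subgroup $G$ of $\Q$ is an increasing union of cyclic subgroups $G_n$; intersecting with $\Q_+$ exhibits $M = G \cap \Q_+$ as an increasing union of cyclic submonoids, and the converse follows by computing the difference group of the union of cyclics.

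For (b)$\Leftrightarrow$(d), I would write $m_1/n_1 = M_1/L$ and $m_2/n_2 = M_2/L$ with $L = \mathrm{lcm}(n_1, n_2)$, verify the identity $\gcd(M_1, M_2) = \gcd(m_1, m_2)$ by comparing $p$-adic valuations, and then use B\'ezout $\gcd(M_1, M_2) = uM_1 + vM_2$ with $u, v$ of opposite signs to express $\gcd(m_1, m_2)/L$ as a non-negative difference of two elements of $M$, hence in $M$ by (b). The converse is immediate because $a - b$ is always a non-negative integer multiple of $\gcd(m_1, m_2)/L$. For (b)$\Leftrightarrow$(f), one direction is trivial; for (f)$\Rightarrow$(b), given $a > b$ in $M$ with $a/(a-b) = s/t$ in reduced form, we have $s(a-b) = ta \in M$, so (f) applied with $n = s$ yields $a - b \in M$.

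For (b)$\Rightarrow$(e), write $M = G \cap \Q_+$. If $G$ is cyclic then $M \cong \N_0$. Otherwise, for each non-zero $g \in G$ pick $g' \in G \setminus g\Z$ and write $g'/g = r/s$ in lowest terms with $s \geq 2$; the subgroup $\langle g, g'\rangle$ of $\Q$ equals $(g/s)\Z$, so $g/p \in G$ for every prime $p \mid s$, and when $g \in M$ the element $g/p \geq 0$ lies in $G \cap \Q_+ = M$, so $g \in pM$ and $g$ is not of height $(0,0,0,\dots)$.

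The main obstacle is (e)$\Rightarrow$(b). If $M \cong \N_0$ the conclusion is immediate, so assume $M \neq \{0\}$, $M \not\cong \N_0$, and $M$ has no elements of height $(0,0,0,\dots)$. The strategy is to fix $z \in \mathrm{Diff}(M) \cap \Q_+$, obtain a minimal $n_0 \geq 2$ with $n_0 z \in M$ (using the ``$n(a-b) \in M$'' claim from the (f)$\Rightarrow$(b) step), and apply ``no height'' to $n_0 z$ to produce a prime $p$ with $n_0 z/p \in M$. When $p \mid n_0$ this directly contradicts the minimality of $n_0$. The hard part is ruling out the case $p \nmid n_0$: I would iteratively apply ``no height'' to the successive descendants $n_0 z/(p_1 p_2 \cdots p_k) \in M$, exploiting the non-cyclic structure of $G$ (as in the (b)$\Rightarrow$(e) argument applied to the Pr\"ufer half-group $D = G \cap \Q_+$, which automatically satisfies (e)) to force one of these primes eventually to divide $n_0$, closing the contradiction.
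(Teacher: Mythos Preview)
Your handling of the equivalences among (a), (b), (c), (c'), (d), (f) is correct and essentially parallel to the paper's (your (b)$\Leftrightarrow$(d) via a common denominator and the identity $\gcd(M_1,M_2)=\gcd(m_1,m_2)$, and your (f)$\Rightarrow$(b) via $s(a-b)=ta\in M$, are if anything tidier). The direction (b)$\Rightarrow$(e) is also fine.

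The genuine gap is in (e)$\Rightarrow$(b). Your contradiction rests on the claim that when you iterate the ``no height $(0,0,0,\dots)$'' hypothesis starting from $n_0 z\in M$, producing $n_0 z/(p_1\cdots p_k)\in M$, \emph{some} $p_k$ must eventually divide $n_0$. Nothing you have written forces this: condition (e) only guarantees that at each stage \emph{some} prime works, and that prime may be coprime to $n_0$ every single time (think of an $M$ in which the only available prime at each step of the descent from $n_0z$ is a fixed prime $p\nmid n_0$). Your parenthetical appeal to $D=G\cap\Q_+$ does not close this: knowing that $D$ is Pr\"ufer (hence satisfies (e)) tells you about divisibility \emph{in $D$}, whereas what you need is divisibility \emph{in $M$}, and the entire issue is that $M$ might sit properly inside $D$. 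As stated, the step ``force one of these primes eventually to divide $n_0$'' is an assertion, not an argument.

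The paper does not attempt (e)$\Rightarrow$(b) at all; it proves (e)$\Rightarrow$(c) by a constructive route that sidesteps the need to control which primes appear. For each $q\in M$ it iterates ``no height'' to obtain $q,\,q/p_1,\,q/(p_1p_2),\ldots\in M$ and takes their union, a Pr\"ufer (hence half-group) submonoid $M_q\subseteq M$ containing $q$; it then forms finite sums $M_{q_1}+\cdots+M_{q_n}$, argues these are again half-group submonoids, and exhibits $M$ as their increasing union. If you want to repair your own route, you need an independent reason why $M$ must contain $(n_0/q)z$ for some prime $q\mid n_0$; that idea is currently missing.
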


\begin{proof}
(i) \, \underbar{(a) $\Rightarrow$ (b):} Suppose 
\[M=\langle\frac{m_1}{n_1}\rangle\cup \langle\frac{m_2}{n_2}\rangle \cup \langle\frac{m_3}{n_3}\rangle \cup \dots ,\]
where
\[\langle\frac{m_1}{n_1}\rangle \subseteq \langle\frac{m_2}{n_2}\rangle \subseteq \langle\frac{m_3}{n_3}\rangle \subseteq \dots\]
Let $\displaystyle{\frac{a}{b}, \frac{c}{d}\in M}$ with $\displaystyle{\frac{c}{d}>\frac{a}{b}}$. There is an $r\in\N_0$ such that $\displaystyle{\frac{a}{b}, \frac{c}{d}\in \langle\frac{m_r}{n_r}\rangle}$, 
hence 
\[\displaystyle{\frac{c}{d}=k\frac{m_r}{n_r}},\, \displaystyle{\frac{a}{b}=l\frac{m_r}{n_r}}, \text{ with $k>l$},\]
 and so 
 \[\displaystyle{\frac{c}{d}-\frac{a}{b}=(k-l)\frac{m_r}{n_r}\in  \langle\frac{m_r}{n_r}\rangle\subseteq M}.\]
Thus $M$ is difference closed.

\medskip\noindent
\underbar{(b) $\Rightarrow$ (c):} Let $M$ be difference closed and let $G=M\cup (-M)$, where $-M=\{-m\;:\;m\in M\}\subseteq \Q$. We need to show that $G$ is a group. We only need to show that the sum of two elements of $G$ is an element of $G$ (everything else being clear). Let $a,b\in G$. If $a,b\in M$, or $a,b\in (-M)$, clear. Suppose $a\in M$, $b\in (-M)$. Let $b=-c$, $c\in M$. If $a\ge c$, then $a-c\in M$, hence $a+b\in G$. If $a<c$, then $c-a\in M$, hence $a-c\in (-M)$, hence $a+b\in G$. Thus $M$ is a half-group monoid.

\medskip\noindent
\underbar{(c) $\Rightarrow$ (a):} Let $M=G\cap \Q_+$, where $G$ is a subgroup of $(\Q,+)$. If $G=\{0\}$, clear. Suppose $G\ne \{0\}$. The group $\Q$ is a union of an increasing sequence of cyclic subgroups (for example, $\displaystyle{\Q=\cup_{n=1}^\infty\langle\frac{1}{n!}\Z\rangle}$), i.e., 
\[\Q=\langle q_1\rangle\cup \langle q_2\rangle \cup \langle q_3\rangle \cup \dots,\]
where
\[\langle q_1\rangle\subseteq \langle q_2\rangle \subseteq \langle q_3\rangle \subseteq\dots\]
and
\[q_1>q_2>q_3>\dots >0.\]
 Let $n_{i}$ be the smallest natural number such that $n_{i}q_{i}\in G$ $(i=1,2,3,\dots)$. Then 
\[G=\langle n_{1}q_{1}\rangle\cup \langle n_{2}q_{2}\rangle \cup \langle n_{3}q_{3}\rangle\cup \dots,\]
where
\[\langle n_{1}q_{1}\rangle\subseteq \langle n_{2}q_{2}\rangle \subseteq \langle n_{3}q_{3}\rangle\subseteq \dots\]
Hence 
\[M=(\langle n_{1}q_{1}\rangle\cap\Q_+)\cup (\langle n_{2}q_{2}\rangle\cap\Q_+) \cup (\langle n_{3}q_{3}\rangle\cap\Q_+)\cup \dots,\]
where
\[(\langle n_{1}q_{1}\rangle\cap \Q_+)\subseteq (\langle n_{2}q_{2}\rangle\cap\Q_+) \subseteq (\langle n_{3}q_{3}\rangle\cap\Q_+)\subseteq \dots\]
and each of $\langle n_{i}q_{i}\rangle\cap \Q_+$ is a cyclic submonoid of $M$ (generated by $n_{i}q_{i}$).
Thus $M$ is a Pr\"ufer monoid.

\medskip\noindent
\underbar{(c) $\Rightarrow$ (c'):} If $M=G\cap \Q_+$ for some subgroup $G$ of $\Q$, then clearly $\mathrm{Diff}(M)\subseteq(G)$. On the other side, if $x\in G$, then either $x\in M$, in which case $x=x-0\in\mathrm{Diff}(M)$, or $-x\in M$, in which case $x=0-(-x)\in \mathrm{Diff}(M)$. Thus $G=\mathrm{Diff}(M)$, so that $M=\mathrm{Diff}(M)\cap \Q_+$.

\medskip\noindent
\underbar{(c') $\Rightarrow$ (c):} Clear.

\medskip\noindent
\underbar{(b) $\Rightarrow$ (d):} Let $\displaystyle{\frac{m_1}{n_1}, \frac{m_2}{n_2}}$ be two elements of $M$, written in reduced form, with $\displaystyle{\frac{m_1}{n_1}<\frac{m_2}{n_2}}$. Let $d=\gcd(m_1, m_2)$ and $e=\gcd(n_1, n_2)$, both positive. Then 
\begin{align*}
m_1=dx_1, &\quad  m_2=dx_2, & \!\!\!\!\!\! \gcd(x_1, x_2)=1, &\,  \\
n_1=ey_1, &\quad   n_2=ey_2, & \!\!\!\!\!\!\!\!\! \gcd(y_1, y_2)=1, &\;\;\; \mathrm{lcm}(n_1, n_2)=ey_1y_2.
\end{align*}
Since $\gcd(x_1y_2, x_2y_1)=1$, there are $k,l\in\N_0$ such that 
\begin{equation}\label{gcd_lk}
dkx_2y_1-dlx_1y_2=d \text{ or $-d$}.
\end{equation}
We will assume that this difference is equal to $d$, the reasonong being similar if it is equal to $-d$. Since
\[\frac{m_1}{n_1}=\frac{dx_1}{ey_1}\in M,\]
we have
\[\frac{dlx_1}{ey_1}\in M.\]
Also
\[\frac{m_2}{n_2}=\frac{dx_2}{ey_2}\in M,\]
hence
\[\frac{dkx_2}{ey_2}\in M.\]
Hence, since $M$ is difference closed,
\[\frac{dkx_2}{ey_2}-\frac{dlx_1}{ey_1}\in M,\]
i.e., 
\[\frac{dkx_2y_1-dlx_1y_2}{ey_1y_2}\in M.\]
Hence by (\ref{gcd_lk}),
\[\frac{d}{ey_1y_2}\in M,\]
i.e., 
\[\frac{\gcd(m_1, m_2)}{\mathrm{lcm}(n_1, n_2)}\in M.\]
Thus $M$ satisfies the gcd/lcm property.

\medskip\noindent
\underbar{(d) $\Rightarrow$ (b):}  Let $\displaystyle{\frac{m_1}{n_1}, \frac{m_2}{n_2}}$ be two elements of $M$, written in reduced form, with $\displaystyle{\frac{m_1}{n_1}<\frac{m_2}{n_2}}$. Let $d=\gcd(m_1, m_2)$ and $e=\gcd(n_1, n_2)$, both positive. Then 
\begin{align*}
m_1=dx_1, &\quad  m_2=dx_2, & \!\!\!\!\!\! \gcd(x_1, x_2)=1, &\,  \\
n_1=ey_1, &\quad   n_2=ey_2, & \!\!\!\!\!\!\!\!\! \gcd(y_1, y_2)=1, &\;\;\; \mathrm{lcm}(n_1, n_2)=ey_1y_2.
\end{align*}
We have
\[\frac{\gcd(m_1, m_2)}{\mathrm{lcm}(n_1, n_2)}=\frac{d}{ey_1y_2}\in M.\]
Hence $M$ contains the element
\[\frac{d}{ey_1y_2}=\frac{d(x_2y_1-x_1y_2)}{ey_1y_2}=\frac{dx_2}{ey_2}-\frac{dx_1}{ey_1}=\frac{m_2}{n_2}-\frac{m_1}
{n_1}.\]
Thus $M$ is difference closed.

\medskip\noindent
\underbar{(e) $\Rightarrow$ (c):} Assume that $M$ has no elements of height $(0,0,0,\dots)$ Let
\[q_1, q_2, q_3,\dots\]
be the list of all the elements of $M$. Since $M$ has no elements of height $(0,0,0,\dots)$, there are elements $\displaystyle{\frac{q_1}{p_1}, \frac{q_1}{p_1p_2}, \frac{q_1}{p_1p_2p_3},\dots}$ ($p_i$ prime numbers) in $M$, so that the union $M_1$ of cyclic submonoids
\[\langle q_1 \rangle\subseteq \langle\frac{q_1}{p_1}\rangle\subseteq\langle  \frac{q_1}{p_1p_2}\rangle\subseteq\langle  \frac{q_1}{p_1p_2p_3}\rangle\subseteq\dots\]
is a (Pr\"ufer, hence) half-group submonoid of $M$ containing $q_1$. Similarly, there is a half-group submonoid $M_2$ of $M$ containing $q_2$. Since the sum of two half-group submonoids of $\Q_+$ is a half-group submonoid of $\Q_+$, the monoid $M_{1,2}=M_1+M_2$ is a half-group submonoid of $M$ containing $q_1$ and $q_2$, and $M_1\subseteq M_{1,2}$. Continuing in a similar way we construct an increasing sequence 
\[M_1\subseteq M_{1,2}\subseteq M_{1,2,3}\subseteq\dots \]
of half-group submonoids of $M$, such that $M_{1,2,\dots, n}$ contains $q_1, q_2, \dots, q_n$ $(n=1,2,3,\dots)$ Then the union 
\[M=M_1\cup M_{1,2}\cup M_{1,2,3}\cup\dots\]
is a half-group submonoid of $\Q_+$.

\medskip\noindent
\underbar{(d) $\Rightarrow$ (e):} Assume that $M$ satisfies the gcd/lcm property and is not isomorphic to $\N_0$. Suppose to the contrary, i.e., that $M$ has an element of height $(0,0,0,\dots)$, say $\displaystyle{\frac{m_1}{n_1}}$, written in reduced form. Then $\displaystyle{\frac{m_1}{n_1}}\ne 0$. Let $\displaystyle{\frac{m_2}{n_2}}$ be another element of $M$, written in reduced form, which is not in $\displaystyle\langle{\frac{m_1}{n_1}\rangle}$. Let  $d=\gcd(m_1, m_2)$ and $e=\gcd(n_1, n_2)$, both positive. Then 
\begin{align*}
m_1=dx_1, &\quad  m_2=dx_2, & \!\!\!\!\!\! \gcd(x_1, x_2)=1, &\,  \\
n_1=ey_1, &\quad   n_2=ey_2, & \!\!\!\!\!\!\!\!\! \gcd(y_1, y_2)=1, &\;\;\; \mathrm{lcm}(n_1, n_2)=ey_1y_2.
\end{align*}
Hence 
\begin{equation*}
\frac{\gcd(m_1, m_2)}{\mathrm{lcm}(n_1, n_2)}=\frac{d}{ey_1y_2}\in M.
\end{equation*}
We can now write
\begin{equation}\label{gcd_lcm_eq}
\frac{m_1}{n_1}=\frac{dx_1}{ey_1}=x_1y_2\cdot \frac{d}{ey_1y_2}.
\end{equation}
If $x_1=y_2=1$, then $m_1=d$, so that $m_1\mid n_1$, and $n_2=e$, so that $n_1\mid n_2$. Hence 
\[\frac{m_1}{n_1}=\frac{d}{ey_1},\;\;\;\; \frac{m_2}{n_2}=\frac{dx_2}{e},\]
so that 
\[\frac{m_2}{n_2}=x_2y_1\, \frac{m_1}{n_1}\in\langle \frac{m_1}{n_1}\rangle,\]
a contradiction. Hence either $x_1\ne 1$, or $y_2\ne 1$. But then (\ref{gcd_lcm_eq}) implies that $\displaystyle{\frac{m_1}{n_1}}$ is not of height $(0,0,0,\dots)$, a contradiction.

\medskip\noindent
\underbar{(f) $\Rightarrow$ (b):} Clear.

\medskip\noindent
\underbar{(b) $\Rightarrow$ (f):} If $\displaystyle{\frac{m_1}{n_1}, \frac{m_2}{n_2}\in M}$ with $\displaystyle{\frac{m_1}{n_1}\ge \frac{m_2}{n_2}}$, then $\displaystyle{m_1n_1n_2\cdot (\frac{m_1}{n_1}- \frac{m_2}{n_2})}$ $=m_1(m_1n_2-m_2n_1)\in M$ (as $m_1\in M$), hence  $\displaystyle{\frac{m_1}{n_1}- \frac{m_2}{n_2}\in M}$. Thus $M$ is difference closed.
\end{proof}

\begin{corollary}\label{isoms_preserve_conditions}
Let $M, M'$ be two isomorphic submonoids of $\Q_+$. Then $M$ satisfies any of the (equivalent) conditions from the previous theorem if and only if $M'$ satisfies it.
\end{corollary}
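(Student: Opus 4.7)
\medskip
\noindent
\textbf{Proof proposal.}
The plan is to exploit the explicit description, recalled in Section 2 of the excerpt, of isomorphisms between submonoids of $\Q_+$: every isomorphism $\mu\colon M\to M'$ has the form $\mu_\tau(x)=\tau x$ for some positive rational $\tau$, with $M'=\tau M$. Thanks to the equivalences in Theorem \ref{main_thm_prep}, it is enough to check that some single one of the seven conditions is invariant under the operation $M\mapsto \tau M$ for any positive rational $\tau$; all the others then transfer for free.

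The cleanest condition to verify is (b), difference-closedness. I would argue as follows. Suppose $M$ is difference-closed, and let $a',b'\in M'=\tau M$ with $a'\ge b'$. Write $a'=\tau a$ and $b'=\tau b$ with $a,b\in M$. Because $\tau>0$, the inequality $a'\ge b'$ gives $a\ge b$, so $a-b\in M$ by hypothesis. Then
\[a'-b'=\tau a-\tau b=\tau(a-b)\in \tau M=M',\]
which proves $M'$ is difference-closed. The reverse direction is identical (apply the same argument to $\mu^{-1}=\mu_{1/\tau}$). Invoking the equivalence of (b) with (a), (c), (c'), (d), (e), (f) in Theorem \ref{main_thm_prep} then yields the corollary for every condition in the list.

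As a sanity check one can verify condition (e) independently: Proposition \ref{ht_0_elem_preserved_under_isoms} says isomorphisms preserve elements of height $(0,0,0,\dots)$, and being cyclic of the form $\N_0$ is obviously preserved by any monoid isomorphism, so (e) is manifestly isomorphism-invariant. There is no real obstacle to the proof; the only content is the observation that scaling by a positive rational preserves the order on $\Q_+$, which is exactly what makes the difference $a-b$ behave well under $\mu_\tau$.
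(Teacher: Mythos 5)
Your proposal is correct and follows essentially the same route as the paper: both reduce to a single condition via the equivalences of Theorem \ref{main_thm_prep}, use the fact that every isomorphism of submonoids of $\Q_+$ is multiplication by a positive rational $\tau$, and observe that difference-closedness is clearly preserved under $M\mapsto\tau M$. Your write-up just spells out the "which is clear" step that the paper leaves implicit.
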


\begin{proof}
It is enough to prove this for one of the conditions (a)-(e). Since all the isomorphisms $\mu:M\to M'$ have the form $\mu_\tau: x\to \tau x$ for every $x\in M$ with $\tau$ being any positive rational number, it is enough to see that $M$ is difference-closed if and only if $\tau M$ is difference closed, which is clear.  Hence the statement follows. 
\end{proof}

We can now prove the main theorem of the paper. It characterizes the submonoids $M$ of $\Q_+$ for which $F[X;M]$ is an AP domain when $F$ is a field of characteristic $0$.  
 
\begin{theorem}\label{main_thm} 
Let $M$ be a submonoid of $\Q_+$ and $F$ a field.

(i) Each of the conditions (a), (b), (c), (c'), (d), (e), (f) from Theorem \ref{main_thm_prep} implies that $F[X;M]$ is an AP domain. 
 
(ii) If $F$ is of characteristic $0$, then each of the conditions from Theorem \ref{main_thm_prep} is equivalent with $F[X;M]$ being an AP domain. 
\end{theorem}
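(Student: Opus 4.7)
Part (i) is immediate from the material already assembled. By Theorem \ref{main_thm_prep}, conditions (a)--(f) are mutually equivalent, so I need only verify that one of them implies the AP property, and this is exactly the content of Proposition \ref{gcd_lcm_implies_AP} applied to condition (d). No further argument is needed.

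For part (ii), since (i) supplies one direction, I will prove the converse by contrapositive, now assuming the characteristic zero hypothesis on $F$: if the (equivalent) conditions of Theorem \ref{main_thm_prep} fail, I exhibit an atom of $F[X;M]$ that is not prime. The negation of condition (e) produces an element $\pi\in M$ of height $(0,0,0,\dots)$ together with $M\not\cong\N_0$. Theorem \ref{irr_thm}, whose characteristic zero hypothesis is exactly why this direction needs the restriction on $F$, immediately supplies the atom: $X^\pi-1$ is irreducible in $F[X;M]$. Since $\pi\in M\setminus\{0\}$ and $M$ is not cyclic, $M\ne\langle\pi\rangle$, so I can choose $a\in M\setminus\langle\pi\rangle$; writing $a/\pi=r/s$ in lowest terms then forces $s\ge 2$, $\gcd(r,s)=1$, and $sa=r\pi$. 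The identity $X^{sa}-1=X^{r\pi}-1$ gives two factorizations inside $F[X;M]$:
\[
(X^a-1)\bigl(1+X^a+\dots+X^{(s-1)a}\bigr)=(X^\pi-1)\bigl(1+X^\pi+\dots+X^{(r-1)\pi}\bigr),
\]
so $X^\pi-1$ divides the product on the left.

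It then remains to check that $X^\pi-1$ divides neither factor on the left, which will show it is not prime. My plan is to verify both non-divisibilities after enlarging the ambient ring: we have $F[X;M]\subseteq F[X;\Q]$, and the latter is a free module over the Laurent polynomial subring $F[Y,Y^{-1}]$, where $Y=X^d$ and $d>0$ is the positive generator of the subgroup $\pi\Z+a\Z$ of $\Q$. Writing $\pi=k_1d$ and $a=k_2d$ with $\gcd(k_1,k_2)=1$ and $k_1=s\ge 2$, any divisibility in $F[X;\Q]$ between elements of $F[Y,Y^{-1}]$ descends to one in $F[Y,Y^{-1}]$ itself by freeness of the extension. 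In $F[Y,Y^{-1}]$, one has $Y^{k_1}-1\nmid Y^{k_2}-1$ since $k_1\nmid k_2$; and, using $\gcd(k_1,k_2)=1$, the map $j\mapsto jk_2\bmod k_1$ permutes $\{0,\dots,k_1-1\}$, giving
\[
\sum_{j=0}^{k_1-1}Y^{jk_2}\equiv 1+Y+\dots+Y^{k_1-1}\pmod{Y^{k_1}-1},
\]
a nonzero polynomial of degree less than $k_1$, so not divisible by $Y^{k_1}-1$. Thus $X^\pi-1$ is an atom that is not prime, and $F[X;M]$ fails to be AP. The main subtlety I anticipate is precisely the descent of divisibility from $F[X;\Q]$ to the Laurent subring; the rest is routine manipulation of the binomial factorizations.
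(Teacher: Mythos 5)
Your proof is correct, and while part (i) and the skeleton of part (ii) match the paper's (one direction is Proposition \ref{gcd_lcm_implies_AP}; the converse produces the irreducible non-prime element $X^\pi-1$, with irreducibility coming from Theorem \ref{irr_thm}, the only place characteristic $0$ enters), your argument that $X^\pi-1$ is not prime is genuinely different. The paper first invokes Proposition \ref{X_to_a} to see that $M$ has no essential generators, writes $\pi=\pi_1+\pi_2$ with both summands nonzero, performs two rescaling reductions to put $\pi$ in the normal form $\frac{m}{n}+\frac{1}{p}$, and then rules out the divisibility $(X^\pi-1)\mid(X^{(mp+n-1)/p}+\dots+X^{1/p}+1)$ by a long term-by-term comparison of exponents in a hypothetical cofactor. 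You instead take any $a\in M\setminus\langle\pi\rangle$ --- which exists merely because $M$ is a nonzero non-cyclic monoid, so you never need the absence of essential generators or any normalization --- form the two factorizations of $X^{sa}-1=X^{r\pi}-1$, and settle both non-divisibilities by descending to $F[Y,Y^{-1}]$ with $Y=X^d$. The descent is sound: $F[X;\Q]$ is free over $F[X;d\Z]$ on coset representatives of $\Q/d\Z$, one of which can be taken to be $0$, so comparing the trivial-coset component of $g=fh$ shows that divisibility between elements of $F[Y,Y^{-1}]$ passes down (freeness alone would not suffice without $1$ in the basis, but here it is); and non-divisibility in the overring $F[X;\Q]$ trivially gives non-divisibility in $F[X;M]$. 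The two congruences modulo $Y^{k_1}-1$ are standard and, like the paper's computation, characteristic-free. Your route is considerably shorter and isolates the arithmetic into two one-line residue calculations; the paper's route is more self-contained in that it never leaves $F[X;M]$, at the cost of roughly a page of exponent bookkeeping.
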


\begin{proof}
(i) It is enough to show that one of the (equivalent) conditions (a)-(e) implies the condition $F[X;M]$ AP. Hence the claim (i) follows Theorem 5.4 from our paper \cite{gk} (see the above Proposition \ref{gcd_lcm_implies_AP}) which states that if $M$ satisfies the gcd/lcm condition, then $F[X;M]$ is AP.

\smallskip\noindent
(ii) Because of (i) it is enough to show that the condition $F[X;M]$ AP implies one of the (equivalent) conditions (a)-(e). We will show that it implies the condition (e). Let $F$ be a field of characteristic $0$. Assume $F[X;M]$ is AP and $M$ is not isomorphic to $\N_0$. If $M=\{0\}$, clear. So we also assume that $M\ne \{0\}$. By Proposition \ref{X_to_a}, $M$ is infinitely generated and without essential generators. Suppose to the contrary of the statement, i.e., that $M$ has an element of height $(0,0,0,\dots)$, say $\pi$. Since by the hypothesis $\pi$ is not an essential gene\-rator, we have $\pi=\pi_1+\pi_2$ for some non-zero elements $\displaystyle{\pi_1=\frac{m_1}{n_1}}$ and $\displaystyle{\pi_2=\frac{m_2}{n_2}}$, both written in reduced form. We will show that the element $X^\pi-1$ of $F[X;M]$ is irreducible, but not prime. Under the monoid isomorphism $\tau_{n_1n_2}:M\to M'=n_1n_2M$ the element $\pi$ is mapped to the element $\pi'=n_1n_2\pi=m_1n_2+n_1m_2$. The monoid $M'$ is infinitely generated, without essential generators, and $\pi'$ is an element of $M'$ of height $(0,0,0,\dots)$ which is a sum of two non-zero elements of $M'$, namely $m_1n_2$ and $n_1m_2$. Moreover, $X^\pi-1$ is irreducible non-prime in $F[X;M]$ if and only if $X^{\pi'}-1$  is irreducible non-prime in $F[X;M']$. 

So it is enough to prove that, if the monoid $M$ in our statement contains an element $\pi$ of height $(0,0,0,\dots)$ such that $\pi=m_1+m_2$, where $m_1, m_2$ are relatively prime elements of $\N$, then $X^\pi-1$ is irreducible but not prime. Note that $m_1\ne 1$ and $m_2\ne 1$, otherwise $\pi$ wouldn't be of height $(0,0,0,\dots)$. So $m_1$ has at least one prime factor, say $p$.

Under the monoid isomorphism $\displaystyle{\tau_{1/m_2p}:M\to M'=\frac{1}{m_2p}\,M}$ the element $\pi=m_1+m_2$ is mapped to the element 
\[\pi'=\frac{1}{m_2p}\,\pi=\frac{m_1'}{m_2} +\frac{1}{p},\]
where $\displaystyle{m_1'=\frac{m_1}{p}}$. The monoid $M'$ is infinitely generated, without essential generators, and $\pi'$ is an element of $M'$ of height $(0,0,0,\dots)$ which is a sum of two nonzero elements of $M'$, namely $\displaystyle{\frac{m_1'}{m_2}}$ and $\displaystyle{\frac{1}{p}}$, both written in reduced form. Moreover, $X^\pi-1$ is irreducible non-prime in $F[X;M]$ if and only if $X^{\pi'}-1$ is irreducible non-prime in $F[X;M']$.

So it is enough to prove that, if the monoid $M$ in our statement contains an element $\pi$ of height $(0,0,0,\dots)$ such that $\displaystyle{\pi=\frac{m}{n}+\frac{1}{p}}$, where $\gcd(m,n)=1$, $\gcd(n,p)=1$, $n\ne 1$, then $X^\pi-1$ is irreducible non-prime.

So let us assume that we have this situation. Then, by Theorem \ref{irr_thm}, $X^\pi-1$ is irreducible in $F[X;M]$. We will show that it is not prime. We have $\displaystyle{\pi=\frac{mp+n}{pn}}$. Hence 
\[X^\pi-1=(X^{\frac{mp+n}{pn}}-1)\mid (X^{\frac{mp+n}{p}}-1).\]
We have:
\begin{align*}
X^{\frac{mp+n}{p}}-1 &= (X^{\frac{1}{p}})^{mp+n}-1\\
                                      &= (X^{\frac{1}{p}}-1)\,(X^{\frac{mp+n-1}{p}}+X^{\frac{mp+n-2}{p}}+\dots+X^{\frac{1}{p}}+1).
\end{align*}
Since $\displaystyle{\frac{1}{p}<p}$, we have $\displaystyle{(X^\pi-1)\nmid (X^{\frac{1}{p}}-1)}$. Suppose that 
\begin{equation}\label{eq_0}
(X^\pi-1)\mid (X^{\frac{mp+n-1}{p}}+X^{\frac{mp+n-2}{p}}+\dots+X^{\frac{1}{p}}+1).
\end{equation}
Then
\begin{align}\label{eq_star}
X^{\frac{mp+n-1}{p}} &+ X^{\frac{mp+n-2}{p}}+\dots+X^{\frac{1}{p}}+1\notag\\
                                      &= (X^{\frac{mp+n}{pn}}-1)\,(X^{\alpha_1}+g_2X^{\alpha_2}+\dots+g_{k-1}X^{\alpha_{k-1}}-1),
\end{align}
where $\alpha_1>\alpha_2>\dots >\alpha_{k-1}>0$ and $g_2, \dots, g_{k-1}\in F$. It follows that
\[\alpha_1=\frac{(mp+n)(n-1)-n}{pn}.\]
Note that 
\[\displaystyle{\alpha_1\ne \frac{mp+n-i}{p}} \text{ for all $i=2,3,\dots, mp+n-1$},\] since, otherwise, we would get $(i-2)n=mp$, which is not possible since $\gcd(m,n)=1$ and $\gcd(p,n)=1$. Note also that
\[\alpha_1<\frac{mp+n-2}{p}.\]
Hence the exponent $\displaystyle{\frac{mp+n-2}{p}}$ on the left-hand side of (\ref{eq_star}) has to be obtained from 
\[X^{\frac{mp+n}{pn}}\cdot (X^{\alpha_1}+g_2X^{\alpha_2}+\dots+g_{k-1}X^{\alpha_{k-1}}-1),\]
since it cannot be obtained from
\[(-1)\cdot  (X^{\alpha_1}+g_2X^{\alpha_2}+\dots+g_{k-1}X^{\alpha_{k-1}}-1).\]
(These two are parts of the left-hand side.) But then we must have either
\begin{equation}\label{eq_1}
\frac{mp+n}{pn}+\alpha_2=\frac{mp+n-2}{p},
\end{equation}
or, if 
\begin{equation}\label{eq_1_prim}
\frac{mp+n}{pn}+\alpha_i=\frac{mp+n-2}{p}\; \text{ for some $i\ge 3$},
\end{equation}
then the terms with the exponents $\displaystyle{\frac{mp+n}{pn}+\alpha_2, \dots, \frac{mp+n}{pn}+\alpha_{i-1}}$ would have to be cancelled on the right-hand side, so that we would have
\begin{align*}
\frac{mp+n}{pn}+\alpha_2 &= \alpha_1,\\
\frac{mp+n}{pn}+\alpha_3 &= \alpha_2,\\
\hspace{1cm} \dots\dots\dots\dots & \dots\dots\\
\frac{mp+n}{pn}+\alpha_{i-1} &= \alpha_{i-2}.
\end{align*}
Hence
\begin{align*}
\alpha_2 &= \frac{(mp+n)(n-2)-n}{pn},\\
\alpha_3 &= \frac{(mp+n)(n-3)-n}{pn},\\
.\dots & \dots\dots\dots\dots\dots\dots\dots\dots.\\\
\alpha_{i-1} &= \frac{(mp+n)(n-i+1)-n}{pn},\\
\end{align*}
and from (\ref{eq_1_prim})
\[\alpha_i = \frac{(mp+n)(n-i)-2n}{pn}.\]
Now since $\alpha_{i-1}>\alpha_i$, we would have
\[\frac{(mp+n)(n-i+1)-n}{pn}>\frac{(mp+n)(n-1)-2n}{pn},\]
which gives 
\[n>(mp+n)(i-2) \; \text{ for $i\ge 3$},\]
which is not true. Hence (\ref{eq_1}) holds. This gives
\begin{equation}\label{eq)2}
\alpha_2=\frac{(mp+n)(n-1)-2n}{pn}.
\end{equation}
Note that 
\[\displaystyle{\alpha_2\ne \frac{mp+n-i}{p}} \text{ for all $i=3,4,\dots, mp+n-1$},\] since, otherwise, we would get $(i-3)n=mp$, which is not possible since $\gcd(m,n)=1$ and $\gcd(p,n)=1$. Note also that
\[\alpha_2<\frac{mp+n-3}{p}.\]
Hence the exponent $\displaystyle{\frac{mp+n-3}{p}}$ on the left-hand side of (\ref{eq_star})  has to be obtained from 
\[X^{\frac{mp+n}{pn}}\cdot (X^{\alpha_1}+g_2X^{\alpha_2}+\dots+g_{k-1}X^{\alpha_{k-1}}-1)\]
(we noticed before that it cannot be equal to $\alpha_1$).

Reasoning in the same way as before we conclude that
\begin{equation}\label{eq_3}
\alpha_3=\frac{(mp+n)(n-1)-3n}{pn}.
\end{equation}
By induction we get
\begin{equation}\label{eq_4}
\alpha_i=\frac{(mp+n)(n-1)-in}{pn}
\end{equation}
for $i=1,2,3,\dots, r$, where $r$ is the largest integer for which 
\begin{equation}\label{eq_5}
\frac{mp+n}{pn}<\frac{mp+n-r}{p}.
\end{equation}
From (\ref{eq_5}) we get 
\[r<mp+n-1-\frac{mp}{n}.\]
Hence
\[r\le mp+n-2.\]
However, then 
\begin{equation}\label{eq_6}
\frac{mp+n-r}{p}\ge \frac{2}{p},
\end{equation}
so that not all of the exponents $\displaystyle{\frac{mp+n-i}{p}}$ from the left-hand side are obtained as $\displaystyle{\frac{mp+n}{pn}+\alpha_i}$.  As before,
$\displaystyle{\alpha_r=\frac{(mp+n)(n-1)-rn}{pn}}$ cannot be equal to any $\displaystyle{\frac{mp+n-i}{p}}$, $i=r+1, r+2, \dots, mp+n-1$, otherwise we would get $n(i-r-1)=mp$, which is not possible since $\gcd(m,n)=1$ and $\gcd(p,n)=1$. Now note that
\begin{equation}\label{eq_7}
\alpha_r<\frac{mp+n-(r+1)}{p}.
\end{equation}
It follows that the exponent $\displaystyle{\frac{mp+n-(r+1)}{p}}$ from the left-hand side cannot be obtained from 
\[X^{\frac{mp+n}{pn}}\cdot (X^{\alpha_1}+g_2X^{\alpha_2}+\dots+g_{k-1}X^{\alpha_{k-1}}-1)\]
since $r$ was the largest integer for which (\ref{eq_5}) holds, nor from
\[(-1) (X^{\alpha_1}+g_2X^{\alpha_2}+\dots+g_{k-1}X^{\alpha_{k-1}}-1)\]
since (\ref{eq_7}) holds and no $\alpha_i$ with $i<r$ can be equal to any $\displaystyle{\frac{mp+n-i}{p}}$, $i=r+1, r+2, \dots, mp+n-1$. We got a contradiction, hence (\ref{eq_star}) holds, i.e., (\ref{eq_0}) does not hold. So $X^\pi-1$ is not prime.

The theorem is proved.
\end{proof}

%-----------------------------

\vspace{.15cm}
\hspace{-2.8cm}
\begin{tikzpicture}
\node[draw] (ALL_M) at (7,0) {M submonoid of $\Q_+$};
\node[draw] (ONE_ESS) at (2,-2.5) {\begin{tabular}{c} $M$ has at least one\\ essential generator\\ or $M=\{0\}$\end{tabular}};
\node[draw] (GEN_BY_ESS) at (1.99,-5.5) {\begin{tabular}{c} $M$ can be generated\\ by essential generators,\\ i.e., $M$ is atomic\end{tabular}};
\node[draw] (ACCP_COND) at (2.01,-8.4) {\begin{tabular}{c} $M$ is ACCP\\ $\equiv$ $F[X;M]$  ACCP\end{tabular}};
\node[draw] (FIN_GEN) at (2, -10.8) {\begin{tabular}{c} $M$ finitely generated\\ $\equiv$ $F[X;M]$ Noetherian\end{tabular}};
\node[draw] (ZERO_OR_N_ZERO) at (2.01, -14) 
      {\begin{tabular}{c} 
             $M=\{0\}$ or $M\cong \N_0$\\ 
             $\equiv$ $F[X;M]$ Dedekind domain\\
             $\equiv$ $F[X;M]$ UFD\\
             $\equiv$ $F[X;M]$ PID\\
             $\equiv$ $F[X;M]$ Euclidean domain\\ 
      \end{tabular}
      };
\node[draw] (ZERO) at (0.7, -17) {\begin{tabular}{c} $M=\{0\}$ \\ $\equiv$ $F[X;M]$ field \end{tabular}};
\node[draw] (N_ZERO) at (4.2,-17) {$M\cong\N_0$};
\node[draw] (NOESS_OR_N_ZERO) at (13.01,-2.5) {\begin{tabular}{c} $M$ has no essential\\ generators or $M \cong \N_0$ \end{tabular}};
\node[draw] (AP) at (13,-8) 
      {\begin{tabular}{c} 
           $M$ satisfies the conditions\\ 
           from Thm \ref{conditions_thm}\\
          $\equiv$ $F[X;M]$ pre-Schreier;\\
          $\equiv$ $F[X;M]$ Schreier;\\
          $\equiv$ $F[X;M]$ GCD;\\
          $\equiv$ $F[X;M]$ Pr\"ufer\\ 
          $\equiv$ $F[X;M]$ B\'ezout\\
          $\equiv$ $F[X;M]$ integ closed\\
          $\equiv$ $F[X;M]$ MIP\\
          $\equiv$ $F[X;M]$ PC\\
          \vspace{.4cm}
          \phantom{X}
      \end{tabular}
      };
\node[draw, rectangle, rounded corners=3pt, fill=gray!20] (AP_text) at (13,-10.46) {$\equiv$ $F[X;M]$ AP}; 
\draw[-implies, double equal sign distance] (ONE_ESS) -- (ALL_M);
\draw[-implies, double equal sign distance] (GEN_BY_ESS) -- (ONE_ESS);
\draw[-implies, double equal sign distance] (ACCP_COND) -- (GEN_BY_ESS);
\draw[-implies, double equal sign distance] (FIN_GEN) -- (ACCP_COND);
\draw[-implies, double equal sign distance] (ZERO_OR_N_ZERO) -- (FIN_GEN);
\draw[-implies, double equal sign distance] (ZERO) --  (ZERO_OR_N_ZERO);
\draw[-implies, double equal sign distance] (N_ZERO) -- (ZERO_OR_N_ZERO);
\draw[-implies, double equal sign distance] (NOESS_OR_N_ZERO) -- (ALL_M);
\draw[-implies, double equal sign distance] (AP) -- (NOESS_OR_N_ZERO);
\draw[-implies, double equal sign distance] (ZERO_OR_N_ZERO) ..controls (12.5, -13.4).. (AP);
\node (title) at (7.5,-19) {Diagram 2: Implications between some types of submonoids of $\Q_+$};
\end{tikzpicture}

\newpage
\section{Implication diagram}
\begin{proposition}\label{implications_monoids}
Diagram 2 (see above) of implications between some types of submonoids of $\Q_+$ holds.
\end{proposition}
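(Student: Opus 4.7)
The plan is to check each arrow of Diagram 2 individually. Since every nontrivial ingredient has already been supplied in the preceding sections, the proof is essentially bookkeeping: one passes through the left column from bottom to top, then through the right column from bottom to top, and finally verifies the cross arrow. I would also attach to each node the collection of ring-theoretic properties shown inside it, each supplied by a previously cited equivalence.

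For the left column, the bottom two arrows ``$M=\{0\}\Rightarrow M=\{0\}\text{ or }M\cong\N_0$'' and ``$M\cong\N_0\Rightarrow M=\{0\}\text{ or }M\cong\N_0$'' are tautological, and the arrow into ``$M$ finitely generated'' is immediate since both $\{0\}$ and $\N_0$ are cyclic. The implication ``$M$ finitely generated $\Rightarrow$ $M$ ACCP'' follows from Dickson's lemma applied to the finitely many generators, and ``$M$ ACCP $\Rightarrow$ $M$ atomic'' is the standard inductive argument on principal-ideal chains, transferred directly from the integral-domain case. The arrow ``$M$ atomic $\Rightarrow$ $M$ has an essential generator or $M=\{0\}$'' is immediate from Proposition \ref{prop_1}. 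For the ring-theoretic labels: ``$F[X;M]$ Euclidean/PID/Dedekind $\Leftrightarrow M=\{0\}$ or $M\cong\N_0$'' is Proposition \ref{ED_PID_Dedekind}, to which the UFD and field equivalences are added by direct observation; ``$F[X;M]$ atomic $\Leftrightarrow M$ atomic'' is Proposition \ref{atomicity_thm}; and ``$F[X;M]$ Noetherian $\Leftrightarrow M$ finitely generated'' together with ``$F[X;M]$ ACCP $\Leftrightarrow M$ ACCP'' are classical and can be cited from Gilmer's book.

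For the right column, the top arrow is trivial, and ``$M$ satisfies the conditions of Theorem \ref{conditions_thm} $\Rightarrow$ $M$ has no essential generators or $M\cong\N_0$'' follows from condition (e) of that theorem combined with Example \ref{ht_0_0_0_not_ess_gen}, which records that every essential generator is of height $(0,0,0,\dots)$. The equivalences attached to the big right-column node are obtained by combining Proposition \ref{Pr_Be_IntCl} (Pr\"ufer, B\'ezout, integrally closed) with Diagram 1, which collapses B\'ezout to GCD, Schreier, pre-Schreier, MIP, and PC in the context of monoid domains over $\Q_+$. The gray AP equivalence is exactly Theorem \ref{main_thm}, which requires characteristic zero only in one direction. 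Finally, the cross arrow ``$M=\{0\}\text{ or }M\cong\N_0\Rightarrow M$ satisfies the conditions of Theorem \ref{conditions_thm}'' is trivial since both $\{0\}$ and $\N_0$ are difference-closed.

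The main obstacle is entirely expository: the diagram conflates several ring-theoretic equivalences into a single monoid-side node, so the proof must list every such identification and supply a one-line citation. The only place where some care is needed is in verifying that MIP and PC collapse into the B\'ezout class for the monoid domains under consideration; I would do this by chasing Diagram 1 together with Theorem \ref{main_thm} and Proposition \ref{Pr_Be_IntCl}, so that the upward implications through AP close the loop.
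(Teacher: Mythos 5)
Your overall strategy --- decomposing the diagram into individual arrows and node-by-node equivalences and citing the results already established in the paper --- is exactly what the paper's proof does, and most of your verifications match it. There are, however, three places where you are too quick. First, the paper's proof establishes not only the implications but also their \emph{strictness} (the arrows are genuinely one-way), and you do not address strictness at all; in particular, the strictness of ``$M$ ACCP $\Rightarrow$ $M$ atomic'' for submonoids of $\Q_+$ is not obvious --- it requires an example of an atomic Puiseux monoid that fails ACCP, which the paper obtains from the preprint \cite{k2}. Second, you assert that the equivalence ``$F[X;M]$ ACCP $\Leftrightarrow$ $M$ ACCP'' is classical and citable from Gilmer's book; the paper instead cites \cite{k2} for it, and the ascent of ACCP from a Puiseux monoid to its monoid algebra is a genuine result rather than folklore, so this citation needs to be repaired. (The Noetherian equivalence, by contrast, really is \cite[Theorem 7.7]{gil}.)

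Third, attaching ``$F[X;M]$ UFD'' to the bottom-left node is not a ``direct observation'': Proposition \ref{ED_PID_Dedekind} does not mention UFDs, and the paper has to argue UFD $\Leftrightarrow$ atomic $+$ AP $\Leftrightarrow$ atomic $+$ B\'ezout $\Leftrightarrow$ PID, which relies on the collapse of the AP and B\'ezout classes established in Theorem \ref{main_thm}, Theorem \ref{conditions_thm}, and Proposition \ref{Pr_Be_IntCl}; without some such argument the UFD label is unjustified. The remainder of your bookkeeping --- the tautological arrows, Proposition \ref{atomicity_thm} for the atomic node, the use of Proposition \ref{Pr_Be_IntCl}, Theorem \ref{conditions_thm}, Theorem \ref{main_thm}, and Example \ref{ht_0_0_0_not_ess_gen} for the right-hand column, and your (correct) remark that characteristic~$0$ is needed only for the converse direction of the AP equivalence --- agrees with the paper.
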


\begin{proof}
We have: 

(1) All the types in the second rectangle from the top in the second column of the diagram are equivalent by Proposition \ref{implications_domains}, Proposition \ref{Pr_Be_IntCl}, Theorem \ref{conditions_thm}, and Theorem \ref{main_thm}. 

(2) All the types, but UFD, in the second rectangle from the bottom in the first column of the diagram are equivalent by Proposition \ref{implications_domains} and Proposition \ref{ED_PID_Dedekind}. 
Since, by (1), UFD $\Leftrightarrow$ atomic + AP $\Leftrightarrow$ atomic + B\'ezout $\Leftrightarrow$ PID, we have that UFD as well is equivalent with these types.

(3) The equivalence in the third level from the bottom rectangle in the first column of the diagram follows from \cite[Theorem 7.7]{gil}.

(4) The equivalence in the third rectangle from the top in the first column of the diagram follows from \cite{k2}. 

(5) The implication and its strictness from the second rectangle to the first rectangle in the second column of the diagram follows from Example \ref{ht_0_0_0_not_ess_gen}. 

(6) The implication and its strictness from the third  rectangle from the top to the second rectangle from the top  in the first column of the diagram follows from \cite{k2}.

(7) All the remaining implications and their strictness are clear.
\end{proof}

%----------------------------
\section{Concluding remarks}
(1) In our paper \cite{gk} we asked in Question 5.7 if for any monoid $M\subseteq \Q_+$ without essential generators $F[X;M]$ is AP. We can now answer this question in the negative. Namely, the monoid 
\[M=\langle\frac{1}{2}, \frac{1}{2^2}, \frac{1}{2^3}\dots;  \frac{1}{3}, \frac{1}{3^2}, \frac{1}{3^3}, \dots \rangle\subseteq \Q_+\] 
that we considered in Example \ref{ht_0_0_0_not_ess_gen} does not have essential generators, however by Theorem \ref{main_thm} $F[X;M]$ is not AP for any field $F$ of characteristic $0$ since $M$ contains elements of height $(0,0,0,\dots)$, 
the element $\displaystyle{\frac{5}{6}}$, for example, being one of them. 

\medskip
(2) In Question 5.7 from our paper \cite{gk} we also asked for a characterization of all monoids $M\subseteq \Q_+$ such that $F[X;M]$ is AP. The main theorem of this paper (namely, Theorem \ref{main_thm}) answers this question for fileds $F$ of characteristic $0$. In the proof of our main theorem we use Theorem \ref{irr_thm} from our paper \cite{cgk}, which is not true for fields of positive characteristic. So we may naturally ask the following {\bf question}:  {\it for a given characteristic $p>0$ characterize all the submonoids $M$ of $\Q_+$ such that $F[X;M]$ is AP for any field $F$ of characteristic $p$. In parti\-cu\-lar, characterize all the submonoids $M$ of $\Q_+$ such that $F[X;M]$ is AP for any field $F$ of positive characteristic.} 

\medskip
(3) Another question that we had in \cite{gk} was Question 2.11 in which we asked if for any monoid $M\subseteq\Q_+$ which can be generated by essential generators, $F[X;M]$ is atomic. (In other words, if $M$ atomic is equivalent with $F[X;M]$ atomic, as the other direction was proved in our \cite[Proposition 2.10]{gk}.) This question is a special case of the question of Gilmer (\cite[page 189]{gil}) about determining the conditions under which the domain $D[X;M]$ is atomic, where $D$ is a domain and $M$ is a cancellative torsion-free monoid.  For the case of torsion-free abelian groups instead of monoids, the question was considered and some results obtained in \cite[Section 8]{m_3}. (The atomic structure of submonoids of $\Q_+$ was recently investigated in several papers of F.~Gotti, see for example \cite{g}.) 

\medskip
(4) The following is a natural {\bf question} about Diagram 1 and Dia\-gram 2: what would be the weakest type (WT) of integral domains such that all integrally closed domains and all AP domains are WT, and such that all the types of domains from Diagram 3 below (WT) are equivalent in the context of the monoid domains $F[X;M]$, where $M$ is a submonoid of $\Q_+$?

\medskip
(5) In \cite{cz} the notion of U-UFD domains is defined. It was shown that every AP domain is U-UFD and an example is constructed of an U-UFD domain which is not AP. A {\bf question} one can ask is if the notions of AP and U-UFD domains are distinct in the context of the monoid domains $F[X;M]$ with $M\subseteq \Q_+$, and if they are, to characterize the monoids $M\subseteq \Q_+$ for which $F[X;M]$ is U-UFD. One could then use this characterization to obtain other examples of U-UFD domains that are not AP.  

\medskip
(6) Another {\bf question} one can consider is to  characterize the monoids $M\subseteq \Q_+$ for which the monoid domains $F[X;M]$ are IDF, HFD, FFD, BFD (these domains and relations between them are analyzed in detail in \cite{aaz}).

\bigskip
\noindent {\bf Acknowledgment.}
The authors are thankful to Steve Seif for his comments on an earlier draft of the paper and, in particular, for pointing out that the essential generators of a monoid $(M,+)$ are precisely the atoms of $M$.

\bigskip\bigskip
\centerline{
\begin{tikzpicture}
\node[draw] (INT_CL) at (2.01,-5) {\begin{tabular}{c} Integrally\\ closed\end{tabular}};
\node[draw] (PRUF) at (2, -8.2) {Pr\"ufer};
\node[draw] (WT) at (4,-1.5) {WT};
\node[draw] (BEZ) at (6.01,-10.6) {B\'ezout};
\node[draw] (AP) at (5.985,-3) {AP};
\node[draw] (PRE_SCH) at (5.98,-5) {Pre-Schreier};
\node[draw] (SCH) at (5.99,-7) {Schreier};
\node[draw] (GCD) at (6,-8.6) {GCD};
\node[draw] (PC) at (8.5,-8.6) {PC};
\node[draw] (MIP) at (9.7,-10.6) {MIP};
\draw[-implies, double equal sign distance] (INT_CL) -- (WT);
\draw[-implies, double equal sign distance] (AP) -- (WT);
\draw[-implies, double equal sign distance] (PRUF) -- (INT_CL);
\draw[-implies, double equal sign distance] (BEZ) -- (PRUF);
\draw[-implies, double equal sign distance] (BEZ) -- (GCD);
\draw[-implies, double equal sign distance] (GCD) -- (SCH);
\draw[-implies, double equal sign distance] (SCH) -- (PRE_SCH);
\draw[-implies, double equal sign distance] (SCH) -- (INT_CL);
\draw[-implies, double equal sign distance] (PRE_SCH) -- (AP);
\draw[-implies, double equal sign distance] (BEZ) -- (PC);
\draw[-implies, double equal sign distance] (MIP) -- (PC);
\draw[-implies, double equal sign distance] (PC)  ..controls (8.5,-4).. (AP);
\node (title) at (5.2,-12.5) {\phantom{XXX}\begin{tabular}{c} Diagram 3: Some equivalent types of domains\\ in the $F[X;M]$ context with $M\subseteq \Q_+$\phantom{XXX}\end{tabular}};
\end{tikzpicture}
}

\end{document}